\theoremstyle{plain} 
\newtheorem{lemma}[equation]{Lemma} 
\newtheorem{proposition}[equation]{Proposition} 
\newtheorem{theorem}[equation]{Theorem} 
\newtheorem{conjecture}[equation]{Conjecture}
\newtheorem{priorResults}{Theorem}
\theoremstyle{definition}
\newtheorem{definition}[equation]{Definition} 
\theoremstyle{remark}
\newtheorem{remark}[equation]{Remark}
\numberwithin{equation}{section}
\title[Separated Bumps and CZOs] {On the Separated Bumps Conjecture for \\ Calder\'on-Zygmund Operators}
 \subjclass[2000]{Primary: 42B20 Secondary: 42B25, 42B35}
\keywords{weighted inequality, $ A_p$, bumps, Orlicz spaces}
\author{Michael T. Lacey}   
\address{ School of Mathematics, Georgia Institute of Technology, Atlanta GA 30332, USA}
\email {lacey@math.gatech.edu}
\thanks{Research supported in part by grant NSF-DMS 1265570 
and the Australian Research Council through grant ARC-DP120100399.}
\begin{document}

\begin{abstract}
Let $ \sigma (dx) = \sigma (x)dx$ and $ w (dx)= w (x)dx$ be two weights with non-negative locally finite densities 
on $ \mathbb R ^{d}$, and let $ 1<p< \infty $. 
A sufficient condition for the norm estimate 
\begin{equation*}
\int \lvert  T (\sigma f)\rvert ^{p} \, w (dx) \le C_{T, \sigma ,w} ^{p}  \int \lvert  f\rvert ^{p}\, \sigma (dx)  , 
\end{equation*}
valid for all Calder\'on-Zygmund operators $ T$ is that the condition below holds.  
\begin{equation*}
\sup _{\textup{$ Q$ a cube}} 
\lVert  \sigma ^{\frac1{p'}}\rVert_{L ^{A} (Q, \frac {dx}{\lvert  Q\rvert})}  \varepsilon  (\lVert  \sigma ^{\frac1{p'}}\rVert_{L ^{A} (Q, \frac {dx}{\lvert  Q\rvert})}/ 
\sigma (Q) ^{\frac1{p'}})   \bigl[\frac {w (Q) } {\lvert  Q\rvert } \bigr] ^{\frac1{p}} < \infty  
\end{equation*}
Here $ A$ is Young function, with dual  in the P{\'e}rez class $ B_{p}$, 
and the function $ \varepsilon (t)$ is  increasing  on $ (1, \infty )$ with $ \int  ^{\infty } \varepsilon (t) ^{-p'} \frac {dt} t < \infty $. 
Moreover, a  dual condition holds, with the roles of the weights and $ L ^{p}$ indices reversed also holds.   
This is an alternate version of a  result of Nazarov, Reznikov and Volberg  ($ p=2$), one with a simpler formulation, and proof based upon 
stopping times. 
\end{abstract}
 
\maketitle

\section{Introduction} 
Our subject is the two weight theory for Calder\'on-Zygmund operators.  To say that $ \sigma $ is a \emph{weight} on $ \mathbb R ^{d}$ 
means that it is a locally finite, non-negative, Borel measure. Throughout, we further assume that $ \sigma $ is absolutely continuous with respect to Lebesgue measure, $ \sigma (dx)=\sigma (x)dx$, and we will not distinguish the density from the weight.  Fix $ 1<p< \infty $. 
For which pairs of weights $ w$ and $ \sigma $ does the following hold for all Calder\'on-Zygmund operators $ T$? 
\begin{equation} \label{e:T}
\int \lvert  T (\sigma f)\rvert ^{p} \, w (dx) \le C_{T, \sigma ,w} ^{p}  \int \lvert  f\rvert ^{p}\, \sigma (dx)  
\end{equation}
The precise definition for these and other terms will be given in the next section.  Below, we will write $ T _{\sigma } f := T (\sigma f)$. 
In the case that  $ w (x) >0 $ a.e., and $ \sigma (x)=w (x) ^{1-p'}$, so that $ w (x) ^{\frac1{p}} \sigma (x) ^{\frac1{p'}} \equiv 1$, where $ 1= \frac1{p} + \frac1{p'}$, the necessary and sufficient condition is  the famous Muckenhoupt condition, 
\begin{equation} \label{e:Ap}
[w] _{A_p} := \sup _{Q} \frac {w (Q)} {\lvert  Q\rvert } \bigl[ \frac {\sigma (Q)} {\lvert  Q\rvert } \bigr] ^{p-1} < \infty . 
\end{equation}
In the classical case of just the Hilbert transform, this is the result of Hunt-Muckenhoupt-Wheeden \cite{MR0312139}. 
Thoroughly modern treatments of the $ A_p$ theory are given in \cites{12123840,2912709,MR3085756}.  
But, the two weight problem, in which $ \sigma $ and $ w$ have no explicit relationship, is far more complicated, as was recognized even at early stages of the subject.

The question addressed here is: What is the sharpest condition, expressible in terms similar to the $ A_p$ condition, which is sufficient for the 
family of inequalities \eqref{e:T}? 
There are many results in the literature which modify \eqref{e:Ap} by requiring slightly higher integrability conditions 
on the densities of the weights.  The sharpest of these conditions is the  `separated bump conjecture,' which arises from work of 
Cruz-Uribe and  P{\'e}rez \cites{MR1713140,MR1793688}, and Cruz-Uribe and  Reznikov and  Volberg \cite{11120676}.

To set the stage for this condition, observe that the $ A_p$ condition can be rewritten as the $ p$th power of 
\begin{equation*}
\sup _{Q}  \lVert w ^{\frac1{p}}\rVert_{L ^{p} (Q, dx/\lvert  Q\rvert )} \cdot \lVert \sigma ^{\frac1{p'}}\rVert_{L ^{p'} (Q, dx/\lvert  Q\rvert )}.  
\end{equation*}
The bump conditions, in the two weight setting, strengthen this condition by replacing the $ L ^{p}$ norms by stronger ones.
The strengthening  is on the scale of Orlicz spaces.  
Set $ \langle f \rangle _{Q}=\lvert  Q\rvert ^{-1} \int _{Q} f \, dx $, which is just the average of $ f$.   Write 
\begin{equation*}
\langle  f \rangle _{A, Q} := \lVert  f\rVert_{ L ^{A} (Q, \frac {dx} {\lvert  Q\rvert })}, 
\end{equation*}
where $ A$ is  a Young function $ A \::\: [0, \infty ) \to [0, \infty )$, 
and  $ L ^{A}$ denotes the  Luxembourg norm associated to $ A$,  and we have normalized Lebesgue measure on $ Q$.  
The \emph{dual} Young function  to $ A$ is denoted by $ \overline A$.  

We write $ A  \in B_p$ if 
\begin{equation}\label{e:Bp}
\int ^{\infty } \frac {A (t)} {t ^{p}} \cdot \frac {dt} t < \infty . 
\end{equation}
A typical example of function $ A (t)$ is  just a bit smaller than $ t ^{p}$, for example $ t ^{p} (\log t) ^{-1 - \epsilon }$, for $ \epsilon >0$, 
in which case $ \langle f \rangle _{A,Q}$ is just a bit smaller than $ \langle \lvert  f\rvert ^{p}  \rangle_Q ^{1/p}$. 
To put the main conjecture in context, we recall the result of Carlos P{\'e}rez which is basic for us. 

\begin{priorResults}[C. P{\'e}rez \cite{MR1260114}]   Suppose that $  A \in B_p$.  Then, there holds 
\begin{equation} 
\label{e:cmf}
 \bigl\lVert  \sup _{ \textup{$ Q$ a cube}}  \mathbf 1_{Q}  \langle  f \rangle _{A, Q}\rVert_{p} \lesssim \lVert f\rVert_{p}.  
\end{equation}
Moreover, the condition $ A\in B_p$ is necessary for the inequality above. 
\end{priorResults}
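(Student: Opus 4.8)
The plan is to prove the two assertions separately and essentially by hand. Write $ M_A f := \sup _{Q}\mathbf 1 _{Q}\langle f\rangle _{A,Q} $ for the Orlicz maximal operator appearing in \eqref{e:cmf}, and let $ M$ denote the ordinary Hardy--Littlewood maximal operator. After the harmless normalization $ A (1)=1$, which changes neither \eqref{e:Bp} nor the spaces $ L ^{A}$, the condition $ A\in B_p$ is equivalent to $ \int _{1} ^{\infty }A (t)\,t ^{-p-1}\,dt<\infty $, and this is the form I would use throughout.

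For the \emph{sufficiency} the key step is a pointwise reduction of a super-level set of $ M_A$ to one of $ M$. Fix $ \lambda >0$ and split $ \lvert f\rvert =\lvert f\rvert \mathbf 1 _{\{\lvert f\rvert >\lambda /2\}}+\lvert f\rvert \mathbf 1 _{\{\lvert f\rvert \le \lambda /2\}}=:g _\lambda +h _\lambda $. The triangle inequality for the Luxembourg norm together with $ \langle c\mathbf 1 _{Q}\rangle _{A,Q}=c$ gives $ \langle f\rangle _{A,Q}\le \langle g _\lambda \rangle _{A,Q}+\lambda /2$, so $ \langle f\rangle _{A,Q}>\lambda $ forces $ \langle g _\lambda \rangle _{A,Q}>\lambda /2$, i.e. $ \langle A (2g _\lambda /\lambda )\rangle _{Q}>1$ by convexity of $ A$ and the definition of the Luxembourg norm. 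Hence $ \{M_A f>\lambda \}\subseteq \{M (A (2\lvert f\rvert \mathbf 1 _{\{\lvert f\rvert >\lambda /2\}}/\lambda ))>1\}$, and the weak-type $ (1,1)$ bound for $ M$ yields $ \lvert \{M_A f>\lambda \}\rvert \lesssim \int _{\{\lvert f\rvert >\lambda /2\}}A (2\lvert f (x)\rvert /\lambda )\,dx$. Inserting this into the layer-cake formula $ \lVert M_A f\rVert _{p} ^{p}=p\int _0 ^{\infty }\lambda ^{p-1}\lvert \{M_A f>\lambda \}\rvert \,d\lambda $, exchanging the order of integration, and substituting $ t=2\lvert f (x)\rvert /\lambda $ in the inner integral turns the right-hand side into a constant multiple of $ \bigl(\int _{1} ^{\infty }A (t)\,t ^{-p-1}\,dt\bigr)\,\lVert f\rVert _{p} ^{p}$, which is $ \lesssim \lVert f\rVert _{p} ^{p}$ precisely because $ A\in B_p$.

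For the \emph{necessity}, assume \eqref{e:cmf} holds and test it on $ f=\mathbf 1 _{Q_0}$ with $ Q_0$ the unit cube. Using the explicit value $ \langle \mathbf 1 _{S}\rangle _{A,Q}=1/A ^{-1} (\lvert Q\rvert /\lvert S\rvert )$ of the Luxembourg norm of an indicator, and choosing for each $ x$ with $ \lvert x\rvert \ge 2$ a cube $ Q\ni x$ of side length comparable to $ \lvert x\rvert $ containing $ Q_0$, one gets $ M_A\mathbf 1 _{Q_0} (x)\gtrsim 1/A ^{-1} (c\lvert x\rvert ^{d})$. Passing to polar coordinates and substituting $ u=r ^{d}$, then $ v=A ^{-1} (u)$, shows $ \lVert M_A\mathbf 1 _{Q_0}\rVert _{p} ^{p}\gtrsim \int ^{\infty }A' (v)\,v ^{-p}\,dv$, which by an integration by parts equals $ \int ^{\infty }A (v)\,v ^{-p-1}\,dv$ up to a nonnegative boundary term. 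If $ A\notin B_p$ this is infinite, so $ M_A\mathbf 1 _{Q_0}\notin L ^{p}$ although $ \mathbf 1 _{Q_0}\in L ^{p}$, contradicting \eqref{e:cmf}; hence $ A\in B_p$ is necessary. I do not expect a deep obstacle here: the only places that call for genuine care are the convexity/splitting step in the sufficiency argument (which is what replaces the usual stopping-time/Calder\'on--Zygmund decomposition) and the bookkeeping in the change of variables so that the $ B_p$ integral emerges with the correct exponent; the necessity half is a direct computation with a single test function.
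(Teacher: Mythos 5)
This statement is Theorem~A in the paper, a result of P\'erez which the paper simply cites (\cite{MR1260114}) without reproving, so there is no ``paper's own proof'' to compare against. Your self-contained argument is correct and is essentially the standard proof of the $B_p$ theorem: normalize $A(1)=1$; split $|f|$ at height $\lambda/2$ so that $\langle f\rangle_{A,Q}>\lambda$ forces $\langle A\bigl(2|f|\mathbf 1_{\{|f|>\lambda/2\}}/\lambda\bigr)\rangle_Q>1$; apply the weak $(1,1)$ bound for $M$ to obtain $|\{M_Af>\lambda\}|\lesssim\int_{\{|f|>\lambda/2\}}A(2|f|/\lambda)\,dx$; and use the layer-cake formula plus the substitution $t=2|f(x)|/\lambda$ to produce exactly $\bigl(\int_1^\infty A(t)t^{-p-1}\,dt\bigr)\|f\|_p^p$. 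All steps check out, including the Luxembourg-norm computations $\langle c\mathbf 1_Q\rangle_{A,Q}=c$ and $\langle\mathbf 1_S\rangle_{A,Q}=1/A^{-1}(|Q|/|S|)$. In the necessity half your phrase ``up to a nonnegative boundary term'' is a small slip --- integrating $\int_c^R A'(v)v^{-p}\,dv$ by parts gives $A(R)R^{-p}-A(c)c^{-p}+p\int_c^R A(v)v^{-p-1}\,dv$, and the boundary contribution $-A(c)c^{-p}$ is finite and negative, not nonnegative --- but the conclusion you want still holds, since the finite constant cannot rescue a divergent $\int^\infty A(v)v^{-p-1}\,dv$, so $A\notin B_p$ indeed forces $\|M_A\mathbf 1_{Q_0}\|_p=\infty$. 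The only other thing worth flagging is that the change of variables $v=A^{-1}(u)$ uses $A'$ a.e., which is fine because $A$ is convex and therefore locally Lipschitz; and $A^{-1}$ is single-valued for large arguments since $A(t)/t\to\infty$.
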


This has the following implication for weighted inequalities.  The Theorem below from \cite{MR1260114} gives the sharpest `$ A_p$-like' sufficient conditions for the boundedness of the maximal function.  

\begin{priorResults}[C. P{\'e}rez, \cite{MR1260114}]  \label{t:cmf}  Suppose that $ 1< p < \infty $, and $  A \in B_{p}$.  Assume the condition below 
involving the dual function $ \overline  A$,
\begin{equation}\label{e:[]}
[\sigma , w] _{\overline A,p'} := \sup _{ \textup{$ Q$ a cube}}  \langle \sigma ^{\frac1{p'}}  \rangle_{\overline A, Q} \langle w \rangle_ Q ^{\frac1{p}}< \infty .
\end{equation}
Then this maximal function inequality  holds: $ \lVert M _{\sigma }f\rVert_{L ^{p} (w)} \lesssim \lVert f\rVert_{L ^{p} (\sigma )}$, 
where 
\begin{equation*}
M _{\sigma } f = \sup _{Q} \mathbf 1_{Q} \langle  \lvert  f\rvert  \cdot \sigma  \rangle_Q .  
\end{equation*}
\end{priorResults}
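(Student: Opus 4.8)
The plan is to reduce the whole estimate to a single Calder\'on--Zygmund stopping-time decomposition together with P\'erez's Orlicz maximal inequality of Theorem~A, which supplies the only substantive bound. We may assume $f\ge 0$, so that $M_\sigma f=M(f\sigma)$ with $M$ the uncentered Hardy--Littlewood maximal operator over cubes, and (after routine truncations) that $f$ is bounded with compact support and $w$ is bounded. By the standard adjacent-lattice device there are $3^d$ shifted dyadic lattices $\mathcal D_i$ with $M(f\sigma)\lesssim_d\sum_i M^{\mathcal D_i}(f\sigma)$; since $[\sigma,w]_{\overline A,p'}$ is a supremum over \emph{all} cubes it dominates the analogous quantity for each $\mathcal D_i$, so it suffices to bound $\lVert M^{\mathcal D}(f\sigma)\rVert_{L^p(w)}$ for one fixed dyadic lattice $\mathcal D$. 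Set $g:=f\sigma^{1/p}$, so $\lVert g\rVert_{L^p(dx)}=\lVert f\rVert_{L^p(\sigma)}$; the argument is arranged so that Theorem~A is invoked only for $g$, in its dyadic form $\lVert\sup_{Q\in\mathcal D}\mathbf 1_Q\langle g\rangle_{A,Q}\rVert_p\lesssim\lVert g\rVert_p$, which follows from the stated Theorem~A (the dyadic supremum being dominated by the full one) and uses $A\in B_p$.

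Next I would run a Calder\'on--Zygmund stopping decomposition of $f\sigma$ at the geometric levels $2^{Ck}$, $k\in\mathbb Z$, with $C=C(d)$ a large dimensional constant (e.g.\ $C=d+1$): let $\Omega_k:=\{M^{\mathcal D}(f\sigma)>2^{Ck}\}=\bigsqcup_j Q^k_j$, the union of the maximal dyadic cubes with $\langle f\sigma\rangle_{Q^k_j}>2^{Ck}$. Then $\Omega_{k+1}\subseteq\Omega_k$; comparison with the dyadic parent gives $2^{Ck}<\langle f\sigma\rangle_{Q^k_j}\le 2^{d+Ck}$; and, once $C\ge d+1$, this forces the sparseness bound $\sum_{Q^{k+1}_i\subseteq Q^k_j}\lvert Q^{k+1}_i\rvert\le\frac12\lvert Q^k_j\rvert$, so that the sets $E^k_j:=Q^k_j\setminus\Omega_{k+1}$ satisfy $\lvert E^k_j\rvert\ge\frac12\lvert Q^k_j\rvert$ and are pairwise disjoint in $(k,j)$. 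A layer-cake estimate then gives
\begin{equation*}
\int\bigl(M^{\mathcal D}(f\sigma)\bigr)^p w\,dx=\sum_k\int_{\Omega_k\setminus\Omega_{k+1}}\bigl(M^{\mathcal D}(f\sigma)\bigr)^p w\,dx\le\sum_k 2^{C(k+1)p}w(\Omega_k)\lesssim\sum_{k,j}2^{Ckp}w(Q^k_j).
\end{equation*}

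To finish I would insert the bump cube by cube. Applying the generalized H\"older inequality for the conjugate Young pair $(A,\overline A)$ to the factorisation $f\sigma=(f\sigma^{1/p})\cdot\sigma^{1/p'}$ gives
\begin{equation*}
2^{Ck}<\langle f\sigma\rangle_{Q^k_j}\le 2\,\langle f\sigma^{1/p}\rangle_{A,Q^k_j}\,\langle\sigma^{1/p'}\rangle_{\overline A,Q^k_j}=2\,\langle g\rangle_{A,Q^k_j}\,\langle\sigma^{1/p'}\rangle_{\overline A,Q^k_j},
\end{equation*}
while hypothesis \eqref{e:[]} says $\langle\sigma^{1/p'}\rangle_{\overline A,Q^k_j}^{p}\,\langle w\rangle_{Q^k_j}\le[\sigma,w]_{\overline A,p'}^{p}$. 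Multiplying these, and using $w(Q^k_j)=\langle w\rangle_{Q^k_j}\lvert Q^k_j\rvert\le 2\langle w\rangle_{Q^k_j}\lvert E^k_j\rvert$, gives $2^{Ckp}w(Q^k_j)\lesssim[\sigma,w]_{\overline A,p'}^{p}\langle g\rangle_{A,Q^k_j}^{p}\lvert E^k_j\rvert$. Summing over $(k,j)$, using disjointness of the $E^k_j$ and then the dyadic Theorem~A,
\begin{equation*}
\sum_{k,j}2^{Ckp}w(Q^k_j)\lesssim[\sigma,w]_{\overline A,p'}^{p}\int\Bigl(\sup_{Q\in\mathcal D}\mathbf 1_Q\langle g\rangle_{A,Q}\Bigr)^p\,dx\lesssim[\sigma,w]_{\overline A,p'}^{p}\lVert g\rVert_p^p=[\sigma,w]_{\overline A,p'}^{p}\lVert f\rVert_{L^p(\sigma)}^p.
\end{equation*}
Combining the three displays and summing over the lattices $\mathcal D_i$ completes the proof. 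The deepest ingredient, Theorem~A, enters only as a black box; what will take care rather than new ideas is the bookkeeping around the stopping cubes --- securing their sparseness (hence the choice of $C$) and matching the dyadic form of Theorem~A with the statement in the text --- while the rest is the usual layer-cake and generalized-H\"older manipulation.
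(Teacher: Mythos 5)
Your proof is correct. The paper itself does not reproduce a proof of this imported result of P\'erez; it cites \cite{MR1260114} and, in Remark~\ref{r:pass}, sketches a different route: invoke Sawyer's two-weight maximal theorem \cite{MR676801} to reduce to the testing condition $\int_{Q_0}(M_\sigma\mathbf 1_{Q_0})^p\,dw\lesssim\sigma(Q_0)$, and verify it by ``pushing the $p$th power through the sup'' over a geometrically increasing stopping chain, as in \eqref{e:pass}, together with the generalized H\"older inequality and \eqref{e:cmf}. You instead argue directly, without Sawyer's theorem, by the classical Calder\'on--Zygmund stopping decomposition of $M^{\mathcal D}(f\sigma)$ at geometric levels, the resulting sparseness of $\{Q_j^k\}$, the layer-cake inequality, and then the generalized H\"older inequality on each stopping cube to bring in $[\sigma,w]_{\overline A,p'}$, leaving a sum $\sum_{k,j}\langle g\rangle_{A,Q_j^k}^p\lvert E_j^k\rvert$ that is controlled by Theorem~A alone; this is essentially P\'erez's original argument. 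Both routes share the same genuinely hard ingredient, namely \eqref{e:cmf}, and the same elementary cube-by-cube use of \eqref{e:hi}; yours is more self-contained (no appeal to Sawyer), while the paper's is more modular and also motivates the stopping-time machinery used later in the main proof. All the intermediate steps in your write-up --- the choice $C\ge d+1$ to force $\sum_{Q_i^{k+1}\subset Q_j^k}\lvert Q_i^{k+1}\rvert\le\tfrac12\lvert Q_j^k\rvert$, the disjointness of the $E_j^k$, the identity $\lVert f\sigma^{1/p}\rVert_{L^p(dx)}=\lVert f\rVert_{L^p(\sigma)}$, and the dyadic reduction --- check out.
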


Since $  A\in B_p$, $  A$ must be a bit smaller than $ t ^{p}$, hence $\overline A$ must be a bit bigger than $ t ^{p'}$. That is we will have 
$ \lVert \sigma ^{1/{p'}} \rVert_{\overline A, Q} \gtrsim \langle \sigma  \rangle_Q^{1/{p'}}$, whence the condition  \eqref{e:[]} is stronger than the two weight  $ A_p$ condition.  
(It is known that the two weight $ A_p$ condition only characterizes the weak-type norm of the maximal function.) 

This Theorem answers a question of Cruz-Uribe and  P{\'e}rez \cite{MR1793688}. It was proved almost at the same time by 
Nazarov, Reznikov and  Volberg \cite{MR3127385} ($ p=2$), and by Lerner \cite{MR3085756} for all $ p$.  

\begin{priorResults} \label{t:bumps} Let $ \sigma $ and $ w$ be two weights with densities, and $ 1< p < \infty $.  
Let $  A \in B_{p}$ and $  B\in B_{p'}$.  For all Calder\'on-Zygmund operators $ T$, there holds 
\begin{equation}\label{e:main}
\lVert   T _\sigma \::\: L ^{p} (\sigma ) \to L ^{p} (w)\rVert  \lesssim  C_{T} \Bigl\{\sup _{Q} 
\langle \sigma ^{\frac1{p'}}  \rangle_{\overline A, Q} \langle w ^{\frac1{p}}  \rangle _{\overline B,Q} 
\Bigr\}  .
\end{equation}
\end{priorResults}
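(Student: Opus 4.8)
The plan is to linearize $T$ against a test function, pass to a positive \emph{sparse} bilinear form by a stopping-time/corona decomposition, then peel off the bump constant with the Orlicz--Hölder inequality and close the estimate with P\'erez's maximal function theorem, applied once in $L^{p}$ and once in $L^{p'}$. Fix $f\in L^{p}(\sigma)$ and $g\in L^{p'}(w)$, which by the usual density and truncation reductions may be taken bounded with compact support, and consider $\langle T_{\sigma}f,g\rangle_{w}=\int T(\sigma f)\,g\,w\,dx$. The pointwise sparse domination of Calder\'on-Zygmund operators --- Lerner's local mean oscillation estimate \cite{MR3085756}, or Hyt\"onen's representation of $T$ as an average of Haar shifts followed by a corona decomposition of each shift --- produces finitely many sparse families $\mathcal S_{1},\dots,\mathcal S_{N}$ with
\begin{equation*}
\lvert \langle T_{\sigma}f,g\rangle_{w}\rvert \lesssim C_{T}\sum_{j=1}^{N}\sum_{Q\in\mathcal S_{j}}\langle \lvert\sigma f\rvert\rangle_{Q}\,\langle \lvert wg\rvert\rangle_{Q}\,\lvert Q\rvert .
\end{equation*}
Thus it suffices to bound $\mathsf B(f,g):=\sum_{Q\in\mathcal S}\langle\sigma f\rangle_{Q}\langle wg\rangle_{Q}\lvert Q\rvert$ for a single sparse family and $f,g\ge 0$.

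Next I would uncouple the weights from the functions. Write $\sigma f=\sigma^{1/p'}\cdot\sigma^{1/p}f$ and $wg=w^{1/p}\cdot w^{1/p'}g$, and apply Orlicz--Hölder on each cube with the conjugate pairs $(\overline A,A)$ and $(\overline B,B)$, so that $\langle\sigma f\rangle_{Q}\le 2\langle\sigma^{1/p'}\rangle_{\overline A,Q}\langle\sigma^{1/p}f\rangle_{A,Q}$ and $\langle wg\rangle_{Q}\le 2\langle w^{1/p}\rangle_{\overline B,Q}\langle w^{1/p'}g\rangle_{B,Q}$. Multiplying and invoking the hypothesis $\mathcal A:=\sup_{Q}\langle\sigma^{1/p'}\rangle_{\overline A,Q}\langle w^{1/p}\rangle_{\overline B,Q}<\infty$ of \eqref{e:main},
\begin{equation*}
\mathsf B(f,g)\le 4\,\mathcal A\sum_{Q\in\mathcal S}\langle\sigma^{1/p}f\rangle_{A,Q}\,\langle w^{1/p'}g\rangle_{B,Q}\,\lvert Q\rvert .
\end{equation*}

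Finally I would close with the maximal function bounds. By sparseness choose pairwise disjoint $E_{Q}\subset Q$ with $\lvert E_{Q}\rvert\gtrsim\lvert Q\rvert$; on $E_{Q}$ the averages $\langle\sigma^{1/p}f\rangle_{A,Q}$ and $\langle w^{1/p'}g\rangle_{B,Q}$ are dominated, respectively, by $\sup_{Q'}\mathbf 1_{Q'}\langle\sigma^{1/p}f\rangle_{A,Q'}$ and $\sup_{Q'}\mathbf 1_{Q'}\langle w^{1/p'}g\rangle_{B,Q'}$, so summing over the disjoint sets and applying Hölder gives
\begin{equation*}
\sum_{Q\in\mathcal S}\langle\sigma^{1/p}f\rangle_{A,Q}\langle w^{1/p'}g\rangle_{B,Q}\lvert Q\rvert \lesssim \bigl\lVert \sup_{Q}\mathbf 1_{Q}\langle\sigma^{1/p}f\rangle_{A,Q}\bigr\rVert_{p}\,\bigl\lVert \sup_{Q}\mathbf 1_{Q}\langle w^{1/p'}g\rangle_{B,Q}\bigr\rVert_{p'}.
\end{equation*}
Since $A\in B_{p}$, P\'erez's theorem \cite{MR1260114} (the inequality \eqref{e:cmf}) bounds the first factor by $\lesssim\lVert\sigma^{1/p}f\rVert_{p}=\lVert f\rVert_{L^{p}(\sigma)}$; since $B\in B_{p'}$, the same theorem with $p$ replaced by $p'$ bounds the second by $\lesssim\lVert w^{1/p'}g\rVert_{p'}=\lVert g\rVert_{L^{p'}(w)}$. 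Chaining the three displays yields \eqref{e:main}. The dual bump condition, with $w$ and $\sigma$ and with $p$ and $p'$ interchanged, requires no new work, since $\mathsf B$ is symmetric under that exchange and $T^{\ast}$ is again a Calder\'on-Zygmund operator.

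The only genuinely technical ingredient, and the single place where stopping times enter, is the first step: the sparse domination of $\langle T_{\sigma}f,g\rangle_{w}$; everything afterward is soft. If one does not want to cite it as a black box, the obstacle is to run the Calder\'on-Zygmund/corona decomposition directly --- building, for the dyadic model of $T$, a stopping tree adapted to $f$ (or to $\sigma f$) and checking that the stopped pieces assemble into a sparse, i.e.\ Carleson, family while the losses in the kernel smoothness (or in the Haar-shift complexities) stay summable. The borderline nature of the hypotheses, by contrast, is already absorbed exactly by $A\in B_{p}$ and $B\in B_{p'}$, which are the precise thresholds making the two maximal operators in the last step bounded, so no further care is needed there.
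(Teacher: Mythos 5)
Your argument is correct, but note that the paper does not itself prove Theorem~\ref{t:bumps}: it is recorded as a prior result, citing Lerner \cite{MR3085756} for all $p$ and Nazarov--Reznikov--Treil--Volberg \cite{MR3127385} for $p=2$. What you have written is, in substance, Lerner's proof: reduce to sparse bilinear forms (Theorem~\ref{t:} in the paper is the version of that reduction the author uses), apply the Orlicz--H\"older inequality \eqref{e:hi} on each cube with the conjugate pairs $(A,\overline A)$ and $(B,\overline B)$ to extract the joint bump constant, then exploit sparseness to pass to the disjoint sets $E_Q$, apply H\"older in $L^p\times L^{p'}$, and invoke P\'erez's $B_p$ maximal theorem \eqref{e:cmf} twice. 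The NRTV route is a Bellman-function argument and looks nothing like this; your route is the simpler one, works for all $p$ at once, and also makes transparent why the joint-bump case is ``soft'' while the separated-bump Theorem~\ref{t:main} is not --- in the joint case the constant $\mathcal A$ factors out cube-by-cube before any stopping-time structure is needed, whereas the separated condition forces the corona machinery of Section~\ref{s:proof}.

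Two minor points. First, your final remark about the ``dual bump condition'' is superfluous here: the hypothesis $\sup_Q\langle\sigma^{1/p'}\rangle_{\overline A,Q}\langle w^{1/p}\rangle_{\overline B,Q}<\infty$ is already symmetric under $(\sigma,A,p)\leftrightarrow(w,B,p')$, and your estimate of $\mathsf B(f,g)$ controls both of Sawyer's testing conditions at once, so there is nothing left to dualize. Second, you quote a pointwise sparse domination of $T$; at the level of rigor of this paper it is cleaner to quote Theorem~\ref{t:} (Lerner's norm equivalence with sparse operators), which suffices and avoids invoking the stronger pointwise result.
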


In the inequality above, the bumps occur in the same product.  The logic of the two weight theory suggests that the bumps could appear individually. 
Cruz-Uribe and  Reznikov and  Volberg \cite{11120676} identified this conjecture as: 

\begin{conjecture}\label{c} Let $ \sigma $ and $ w$ be two weights with densities, and $ 1< p < \infty $.  
Let $  A \in B_{p}$ and $  B\in B_{p'}$.  For all Calder\'on-Zygmund operators $ T$, there holds 
\begin{equation}\label{e:c}
\lVert   T _\sigma \::\: L ^{p} (\sigma ) \to L ^{p} (w)\rVert \lesssim  C_{T}  \{[\sigma , w] _{\overline A,p'} + [ w, \sigma ] _{\overline B,p} \}   \lVert f\rVert_{L ^{p} (\sigma )}. 
\end{equation}
The constant $ C_T$ is defined in \eqref{e:CT}, and the implied constant depends upon the choice of $ A, B $.    
\end{conjecture}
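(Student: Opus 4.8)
The plan is to run the Nazarov--Treil--Volberg style testing/corona machinery on the dyadic model, and to locate precisely where the \emph{separated} (as opposed to joint) placement of the bumps obstructs the argument.

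\textbf{Step 1: reduction to sparse testing.} By the sparse domination of Calder\'on--Zygmund operators (Lerner's median oscillation formula, or the representation theorem followed by a sparse bound for Haar shifts), it suffices to prove, uniformly over dyadic grids $ \mathcal D$ and sparse families $ \mathcal S \subset \mathcal D$, that the positive operator $ f \mapsto \sum _{Q \in \mathcal S} \langle f\sigma \rangle _Q \mathbf 1_Q$ is bounded $ L ^{p} (\sigma ) \to L ^{p} (w)$ with norm $ \lesssim [\sigma , w] _{\overline A, p'} + [w, \sigma ] _{\overline B, p}$. For such a positive dyadic operator a Sawyer-type two-weight $ T1$ theorem applies, so the target reduces to the two dual local testing inequalities
\begin{equation*}
\Bigl\lVert \sum _{Q \in \mathcal S,\, Q \subseteq Q_0} \langle \sigma  \rangle _Q \mathbf 1_Q \Bigr\rVert _{L ^{p} (w)} \lesssim \sigma (Q_0) ^{1/p} ,
\qquad
\Bigl\lVert \sum _{Q \in \mathcal S,\, Q \subseteq Q_0} \langle w  \rangle _Q \mathbf 1_Q \Bigr\rVert _{L ^{p'} (\sigma )} \lesssim w (Q_0) ^{1/p'} ,
\end{equation*}
for all $ Q_0 \in \mathcal D$, the first to be controlled by $ [\sigma , w] _{\overline A, p'}$ and the second, symmetrically, by $ [w, \sigma ] _{\overline B, p}$.

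\textbf{Step 2: a corona estimate for each testing condition.} Fix $ Q_0$ and treat the first inequality. Build a $ \sigma $-stopping tree $ \mathcal F$ rooted at $ Q_0$: the $ \mathcal F$-children of $ F$ are the maximal $ Q \subsetneq F$ with $ \langle \sigma  \rangle _Q > 4 \langle \sigma  \rangle _F$; standard packing gives $ \sum _{F' \in \mathcal F,\, F' \subseteq F} \sigma (F') \lesssim \sigma (F)$, and on each corona $ \langle \sigma  \rangle _Q \le 4 \langle \sigma  \rangle _F$. Collapsing coronas, and using that the averages $ \langle \sigma  \rangle _F$ grow geometrically down $ \mathcal F$, reduces matters to $ \int _{Q_0} \bigl( \sum _{F \in \mathcal F} \langle \sigma  \rangle _F \mathbf 1_F \bigr) ^{p} w(dx) \lesssim \sigma (Q_0)$. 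Here the first bump enters: since $ \overline A$ lies a little above $ t ^{p'}$ one has $ \langle \sigma  \rangle _F = \lVert \sigma ^{1/p'} \rVert _{L ^{p'} (F, dx/\lvert F\rvert )} ^{p'} \lesssim \langle \sigma ^{1/p'}  \rangle _{\overline A, F} ^{p'}$, hence $ \langle \sigma  \rangle _F \langle w  \rangle _F ^{p'-1} \le [\sigma , w] _{\overline A, p'} ^{p'}$, and the $ B_p$ hypothesis on $ A$ is exactly what powers the maximal-function bound of Theorem \ref{t:cmf} needed to sum the remaining dyadic average. In the model case where the two stopping structures do not interact, this delivers the testing inequality uniformly in $ Q_0, \mathcal S$.

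\textbf{Step 3: the obstruction, and where I expect to have to be clever.} The genuine difficulty is the interaction term. Inserting the stopping decomposition of the test function (dually, a $ w$-stopping tree) into the corona sum of Step 2 produces towers $ Q_1 \subsetneq Q_2 \subsetneq \cdots$ along which neither stopping value is stable, and one must sum $ \sum _j \langle \sigma  \rangle _{Q_j} \langle w  \rangle _{Q_j} \lvert Q_j\rvert$ over them. In Theorem \ref{t:bumps} this is harmless: the \emph{joint} bump keeps $ \langle \sigma ^{1/p'}  \rangle _{\overline A, Q} \langle w ^{1/p}  \rangle _{\overline B, Q}$ uniformly bounded, so every $ Q_j$ carries an Orlicz gain on \emph{both} factors at once, the generic term decays geometrically, and the tower converges. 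With \emph{separated} bumps only one factor is improved in a given cube --- on the other one has nothing better than the raw average --- and the uniform bound for a tower of $ n$ generations grows like a power of $ n$ rather than staying bounded. This is exactly the point where the present paper retreats to the entropy formulation, where the extra factor $ \varepsilon (t)$ with $ \int ^{\infty } \varepsilon (t) ^{-p'} \, dt/t < \infty $ makes the tower sum again. To reach the conjecture as stated one must instead organize the interaction so that no tower appears --- a genuine parallel-corona argument that respects both weights simultaneously --- or extract a self-improvement of the $ B_p$/$B_{p'}$ conditions strong enough to supply the missing decay. I would attempt the former, pairing the two dual testing problems so that the $ \overline A$-bump and the $ \overline B$-bump are each available (in different cubes) along the tower; but I do not presently see how to make such an argument unconditional, and I regard closing this step as the main open obstacle.
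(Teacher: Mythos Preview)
The statement you are addressing is a \emph{conjecture}, not a theorem; the paper does not prove it and explicitly presents it as open. Your proposal is therefore not to be judged as a failed proof but as a diagnosis, and on that score it is accurate: you correctly locate the obstruction in the interaction (tower) terms where only one of the two bumps is available on a given cube, and you correctly note that the paper sidesteps this by introducing the auxiliary increasing function $\varepsilon_p$ with $\int^\infty \varepsilon_p(t)^{-p'}\,dt/t<\infty$, which is precisely what makes the tower sum converge (this is the content of Lemma~\ref{l:3} and the remark preceding it).

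A few points of comparison with the paper's actual argument for the weaker Theorem~\ref{t:main}. Your Step~1 matches the paper exactly: reduction to sparse operators and then to the Sawyer-type testing inequalities \eqref{e:testing}. Your Step~2 is close in spirit but not in detail. The paper does \emph{not} run a single $\sigma$-corona; it dualizes the testing inequality against $g\in L^{p'}(w)$, builds stopping data $\mathcal T$ for $g$ (not for $w$), and then builds $\sigma$-stopping data $\mathcal S$ that is forced to restart at every cube of $\mathcal T$. This nesting $\mathcal T\subset\mathcal S$ is what produces the parallel corona and allows the splitting of Lemmas~\ref{l:1}--\ref{l:3}. Your sketch of a plain $\sigma$-tree followed by a dual $w$-tree would not by itself give the control over $i_S(Q)\le i_T(Q)$ that the paper exploits in the chain of inequalities proving \eqref{e:decrease}. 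Your Step~3 is the honest part: the paper's Lemma~\ref{l:3} is exactly where the $\varepsilon_p$ integrability condition is used, and without it the argument does not close. The paper says as much, calling the hypothesis ``seemingly unavoidable'' and suggesting the approach may yield counterexamples to the conjecture rather than a proof.

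In short: you have not proved the conjecture, but neither has the paper, and your identification of the gap is correct and aligned with the paper's own analysis.
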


This has been difficult to verify.  The very interesting paper of Nazarov, Reznikov and  Volberg \cite{13062653} proves a certain version of the 
conjecture in the case of $ p=2$, under an additional condition which they speculate is necessary. 
The result below holds in all $ L ^{p}$,  has fewer technical hypotheses, and contains \cite{13062653}.  

\begin{theorem}\label{t:main}  
Let $ \sigma $ and $ w$ be two weights with densities,    and $ 1< p < \infty $. Let $  A \in B_{p}$ and $  B\in B_{p'}$, 
and let $ \varepsilon _p , \varepsilon _ {p'}$ be two monotonic increasing functions on $ (1, \infty )$ which satisfy 
$ \int _{1} ^{\infty } \varepsilon _{p} (t) ^{-p'} \frac {dt} t = 1$, and similarly for $ \varepsilon _{p'}$.  
Define 
\begin{equation}\label{e:[[}
 \lceil\sigma , w\rceil_{\overline A, \varepsilon _ p, p'} := 
 \sup _{ \textup{$ Q$ a cube}} 
\varepsilon _{p}  
\Bigl(1+
\frac { \langle \sigma ^{1/{p'}}  \rangle_{\overline A, Q}} {\langle \sigma  \rangle_Q ^{1/{p'}}}
\Bigr) 
\langle \sigma ^{\frac1{p'}}  \rangle_{\overline A, Q} \langle w \rangle_ Q ^{\frac 1p}. 
\end{equation}
For any Calder\'on-Zygmund operator, there holds 
\begin{equation*}
\lVert   T _\sigma \::\: L ^{p} (\sigma ) \to L ^{p} (w)\rVert \lesssim  C_{T}  \bigl\{\lceil\sigma , w\rceil _{\overline A, \varepsilon _{p}, p' }  +  \lceil w, \sigma \rceil _{\overline B, \varepsilon _{p'}, p,} \bigr\}    \lVert f\rVert_{L ^{p} (\sigma )}. 
\end{equation*}
The constant $ C_T$ is defined in \eqref{e:CT}. 
\end{theorem}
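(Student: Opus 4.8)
The plan is to reduce, in the standard way, the bound for a general Calder\'on--Zygmund operator $T_\sigma$ to a bound for a finite sum of positive dyadic (Haar shift, or simply averaging) operators of the form
\begin{equation*}
  \mathcal{A}_{\mathcal{D}} (f \sigma ) := \sum _{Q \in \mathcal{D}} \langle f \sigma \rangle_Q \mathbf{1}_Q ,
\end{equation*}
together with its dual. This reduction---via the theorem of Lerner on domination of CZOs by sparse operators, or the Lerner--Nazarov / Hyt\"onen machinery implicit in the cited works---is by now routine, so I would invoke it and then concentrate on testing $\mathcal{A}_{\mathcal{D}}$ (restricted to a sparse family) against the pair $(\sigma ,w)$. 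By the bilinear formulation it suffices to prove
\begin{equation*}
  \sum _{Q \in \mathcal{S}} \langle f \sigma \rangle_Q \, \langle g w \rangle_Q \, \lvert Q \rvert
    \lesssim \bigl( \lceil \sigma , w \rceil _{\overline A , \varepsilon_p , p'} + \lceil w, \sigma \rceil _{\overline B , \varepsilon_{p'}, p} \bigr) \, \lVert f \rVert_{L^p(\sigma )} \lVert g \rVert_{L^{p'}(w)}
\end{equation*}
for a sparse collection $\mathcal{S}$, then split $\mathcal{S}$ into the cubes where the $\sigma$-average of $f$ dominates (handled by the first bump) and those where the $w$-average of $g$ dominates (handled by the second).

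On the half where $\langle f \rangle_Q^\sigma$ is large, I would run a two-fold stopping-time argument. First, the $\sigma$-Carleson / principal-cube construction on $f$: let $\mathcal{F}$ be the stopping cubes where $\langle f \sigma \rangle_{F'} / \sigma(F')$ doubles relative to its parent, so that $\sum_{F \in \mathcal{F}} \sigma(F) \lesssim \lVert f \rVert_{L^p(\sigma)}^p$ after pairing with a dual function, and $\langle f\sigma\rangle_Q/\sigma(Q) \approx \langle f\sigma\rangle_F / \sigma(F)$ for $Q$ in the stopping interval of $F$. Second, and this is the crux, a stopping time that quantizes the bump ratio $\rho_Q := \langle \sigma^{1/p'} \rangle_{\overline A, Q} / \langle \sigma \rangle_Q^{1/p'}$: group the $Q$'s in each stopping interval of $F$ according to the dyadic layer $\{2^{k} \le \rho_Q < 2^{k+1}\}$. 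Within a fixed layer $k$, the $A\in B_p$ hypothesis (equivalently P\'erez's Theorem~B applied with the dual function $\overline A$) lets one sum $\sum \langle \sigma^{1/p'}\rangle_{\overline A, Q}^{?} \langle w\rangle_Q \lvert Q\rvert$ against a single factor of $[\sigma,w]_{\overline A, p'}$ plus a $\sigma$-Carleson measure bound; schematically the layer-$k$ contribution is controlled by $\varepsilon_p(1+2^k)^{-1} \lceil \sigma,w\rceil_{\overline A,\varepsilon_p,p'}$ times the $L^p(\sigma)$–$L^{p'}(w)$ pairing, because the defining sup of $\lceil \,\cdot\, \rceil$ already carries a factor $\varepsilon_p(1+\rho_Q) \ge \varepsilon_p(1+2^k)$ on each cube in that layer.

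The final step is to sum over the layers $k \ge 0$. Here the hypothesis $\int_1^\infty \varepsilon_p(t)^{-p'}\,\frac{dt}{t} = 1$ is exactly what is needed: after H\"older in $k$ (with exponents $p,p'$ across the two stopping families), the geometric weights $2^{k}$ turn the series $\sum_k \varepsilon_p(1+2^k)^{-p'}$ into a convergent integral comparable to $1$, so the layered estimates sum to the claimed bound. The dual half, where $\langle g \rangle_Q^w$ dominates, is handled symmetrically with $B \in B_{p'}$, $\overline B$, $\varepsilon_{p'}$, and $\lceil w, \sigma \rceil_{\overline B, \varepsilon_{p'}, p}$ in place of their counterparts. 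I expect the main obstacle to be the bookkeeping in the doubly-indexed stopping argument---ensuring that the two Carleson conditions (one from $f$ and $\sigma$, one from the $\rho_Q$-layers) are genuinely compatible and that the constants from P\'erez's $B_p$ estimate are uniform across layers---rather than any single hard inequality; once the layers are isolated, each piece is classical and the convergence of $\sum_k \varepsilon_p(1+2^k)^{-p'}$ does the rest.
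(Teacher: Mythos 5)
Your reduction to sparse operators, and the instinct to run a stopping-time argument with the ratio $\rho_Q = \langle\sigma^{1/p'}\rangle_{\overline A,Q}/\langle\sigma\rangle_Q^{1/p'}$ playing a distinguished role, are the right opening moves, and the role you envision for $\int_1^\infty \varepsilon_p(t)^{-p'}\,\frac{dt}{t}=1$ is indeed where the hypothesis enters. But the central step---closing the estimate \emph{per layer} $\{2^k\le\rho_Q<2^{k+1}\}$ via ``P\'erez's $B_p$ theorem plus a $\sigma$-Carleson bound''---has a genuine gap. Once you extract the factor $\varepsilon_p(1+2^k)^{-1}\lceil\sigma,w\rceil$ and the $\sigma^{1/p}$-bump, the leftover from generalized H\"older is a term of the form $\langle w\rangle_Q^{1/p'}\langle g\rangle_Q^w$. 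Summing its $p'$-th power over the sparse cubes in a fixed layer gives $\sum_Q (\langle g\rangle_Q^w)^{p'}w(Q)$, and there is nothing that bounds this by $\lVert g\rVert_{L^{p'}(w)}^{p'}$: a sparse family is Carleson for Lebesgue measure, not for $w$. This is precisely the obstruction named in the paper (see the remark preceding Lemma~\ref{l:1} and Remark~\ref{r:pass}): ``the cubes $Q\in\mathcal Q$ need not be $w$-Carleson.'' Cutting by the size of $\rho_Q$ alone imposes no constraint on the $w$-densities of the cubes in a layer, so the layer-by-layer H\"older you propose does not close.

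The paper's resolution is structurally different, and worth contrasting. It pigeonholes the \emph{product} $\varepsilon_p(\rho(Q))\langle\sigma^{1/p'}\rangle_{\overline A,Q}\langle w\rangle_Q^{1/p}\simeq\alpha$ (not the ratio $\rho(Q)$), builds \emph{two} coupled stopping families $\mathcal T$ (on $g$, dual side) and $\mathcal S$ (on $\sigma$, restarted at each $T\in\mathcal T$), and splits the sum over $Q$ into three cases: (i) $\langle w\rangle_Q$ not too large relative to $\langle w\rangle_T$, where a trivial $w$-Carleson bound is forced; (ii) $\langle\sigma\rangle_Q$ has dropped well below $\langle\sigma\rangle_S$, where the averages telescope and the $p$-th power can be pushed inside the sum; (iii) the complementary hard case. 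Only in case (iii) is $\varepsilon_p$ used, and the geometric decay $\rho(Q)\lesssim 2^{-i_T(Q)/2}\rho(T)$ is a \emph{derived consequence} of the complementary conditions in (i)--(ii) combined with the pigeonholing of $\alpha$---not an assumption imposed by a $\rho_Q$-layer decomposition. It is this derived decay that converts $\int_1^\infty\varepsilon_p(t)^{-p'}\frac{dt}{t}<\infty$ into the pointwise bound $\sum_Q\varepsilon_p(\rho(Q))^{-p'}\mathbf 1_Q\lesssim\mathbf 1_T$ and lets H\"older close. So the missing idea in your proposal is the three-way case split: you need to peel off the easy regimes (dominated $w$-density, collapsing $\sigma$-density) first, so that the remaining cubes inherit, rather than are assigned, the decay of $\rho(Q)$ that makes the $\varepsilon_p$-sum converge.
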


Compared to the condition in \eqref{e:c}, the bound above penalizes the product $ \langle \sigma ^{\frac1{p'}}  \rangle_{\overline A, Q} \langle w \rangle_ Q ^{1/p}$ by the additional term involving $ \varepsilon _p$. 
Now, $ \varepsilon_p (t)$ has to satisfy an integrability condition that requires it to be, for example, of the order of 
\begin{equation*}
\varepsilon (t) \simeq (\log t) ^{+\frac 1 {p'}}  (\log \log t) ^{+\frac 1 {p'}}   (\log\log\log t) ^{+\frac {1+ \eta } {p'}}, \qquad t \to \infty , \ \eta >0. 
\end{equation*}

We prefer this formulation of the theorem, since it requires fewer hypotheses than \cite{13062653}. 
In addition, the  extra hypothesis  involving $ \varepsilon _p $ and $ \varepsilon _{p'}$ enter the proof  in transparent, and seemingly unavoidable, way, see Lemma~\ref{l:3}. 
Our approach can provide further  insights into how to build counterexamples to Conjecture~\ref{c}, 
in light of the fact that the Bellman approach followed in \cite{11120676}, is not  `reversible' in the separated bump setting,

\bigskip

The study of `bumped' $ A_p$ conditions have an  extensive history within just the subject of weighted estimates for singular integrals. 
Related conditions have arisen in the spectral theory of Schr\"odinger operators \cite{MR800004}, and in the study of Sarason's conjecture 
on the composition of Toeplitz operators \cites{sarasonConj,MR1395967}.  The article of Neugebauer \cite{MR687633} firmly introduced the 
subject into weighted estimates, and P{\'e}rez \cites{MR1260114,MR1291534} promoted their fine study.  The reader is referred  to \cites{11120676,MR1617653,MR2351373,MR2797562,MR2854179} for more history on this subject. 

More recent developments have been as follows.  The paper \cite{11120676} raised as a conjecture the Theorem proved above, and established 
variants of the result where the $ A (t)$ were `log bumps', namely $ A (t) \simeq t ^{p} ( \log (\operatorname e ^{} + t)) ^{-1- \delta }$, for $ \delta >0$.  
The papers of Lerner \cite{MR3085756}*{Thm 1.5} and Nazarov,  Reznikov \& Volberg \cite{MR3127385} ($ p=2$) established 
Theorem~\ref{t:bumps}. 
The latter  estimate is strictly stronger than \eqref{e:main}, as an example of \cite{13082026}*{\S7} shows.  
Variants of these results were proved in the setting of spaces of homogeneous type in \cite{13082026}.

The next section collects standard aspects of the subject, and can be referred back to as needed. The proof of the main theorem is in \S\ref{s:proof}.

\section{Notation, Background} 

Constants are suppressed:  By $ A \lesssim B$, it is meant that there is an absolute constant  $ c$ so that $ A \leq c B$.

We say that $ K \::\: \mathbb R ^{d} \times \mathbb R ^{d} \to \mathbb R $ is a \emph{Calder\'on-Zygmund kernel} if for some constants $ \Lambda >0$, 
and  $ C_K> 0$, and $ 0< \eta < 1$, such that  these conditions hold: For $ x, x', y \in \mathbb R ^{d}$
\begin{gather*}
\lVert K ( \cdot , \cdot )\rVert_{\infty } < \infty , \qquad  K (x,y)=0 , \quad \lvert  x-y\rvert> \Lambda , 
\\
\lvert  K (x,y)\rvert < C_K \lvert  x-y\rvert ^{-d} \,, \qquad x\neq y, 
\\
\lvert  K (x,y) - K (x',y)\rvert < C_K \frac {\lvert  x-x'\rvert ^{\eta } } { \lvert  x-y\rvert ^{d+ \eta } } , \qquad  \textup{if $ $}
2 \lvert  x-x'\rvert < \lvert  x-y\rvert  ,
\end{gather*}
and a fourth condition, with the roles of the first and second coordinates of $ K (x,y)$ reversed also holds.  These are typical conditions, 
although in the first condition, we have effectively truncated the kernel, at the diagonal and infinity. The effect of this is that we needn't be concerned 
with principal values.  

Given a Calder\'on-Zygmund kernel $ K$ as above, we can define 
\begin{equation*}
T f (x) := \int K (x,y) f (y) \; dy 
\end{equation*}
which is defined for all $ f\in L ^2 $ and $ x\in \mathbb R ^{d}$. We say that $ T$ is a \emph{Calder\'on-Zygmund operator}, since it necessarily extends to a bounded operator on $ L ^2 (\mathbb R ^{d})$.  We define 
\begin{equation} \label{e:CT}
C_T := C_K + \lVert T \::\: L ^2 \to L ^2 \rVert_{}. 
\end{equation}
It is well-known that $ T$ is also bounded on $ L ^{p}$, $ 1< p < \infty $, with norm controlled by $ C_T$.

\bigskip

To say that $ A \::\: [0, \infty ) \to [0, \infty )$ is a \emph{Young function} means that $A (0)=0$, $ A (1)=1$, $ A$ is convex, continuous increasing  and $ A (t)/t \to \infty $, as $ t \to \infty $.  The \emph{Luxembourg norm} associated to $ A$, on a measure space $ (X, \mu )$ is given by 
\begin{equation*}
\lVert f\rVert_{L ^{A} (X, \mu )} := \inf \{ \lambda >0 \::\:  \int _{X} A (\lvert  f\rvert/\lambda  )\, d \mu \le 1\}. 
\end{equation*}
The infimum of the empty set is taken to be $ \infty $.   

We most frequently work with Young functions of the form $ A (x)$ of the form $ c t ^{p} \alpha (t )$, where $ 1< p < \infty $, and 
$ \alpha (t) $ is constant for $ 0\le t \le 1$, and $ \alpha (t) $ is slowly varying at infinity, thus $ \alpha (2t)/\alpha (t) \to  1$ as $ t\to \infty $.  
The \emph{dual} to $ A$ is defined by $ \overline A (t)  \simeq  \int _{0} ^{t} ( A') ^{-1} (s) \; ds $. In this case, note that 
\begin{equation}\label{e:overline}
\overline  A (t) = \int _{0}  ^{t} \Bigl[\frac {s} {\alpha (s )} \Bigr] ^{\frac 1 {p-1}} \; ds 
\simeq t ^{p'} \alpha (t) ^{- \frac 1 {p-1}}.  
\end{equation}
This is also a Young function, and we will use the \emph{generalized H\"older's inequality}, in the form below, but note $ X$ must be a probability space.  
\begin{equation}\label{e:hi}
\int _{X} f \cdot g \, d \mu  \lesssim  \lVert f\rVert_{L ^{A} (X, \mu )} \lVert g\rVert_{L ^{\overline A } (X, \mu )} , \qquad \mu (X)=1.  
\end{equation}

\section{General Considerations} 

We use the following consequence of Lerner's remarkable median inequality.  We say that 
$ T $ is a \emph{sparse operator} if
$
T   f=\sum_{Q\in \mathcal Q} \langle f \rangle_Q \mathbf 1_{Q}, 
$
where $ \mathcal Q$ is	 a collection of dyadic cubes for which    
\begin{equation} \label{e:1/2}
\Bigl\lvert 
\bigcup _{Q' \in \mathcal Q \::\: Q'\subsetneq Q} Q' 
\Bigr\rvert  \le \tfrac {1} {2} . 
\end{equation}
We will also refer to $ \mathcal Q$ as \emph{sparse}, and will typically suppress the dependence of $ T$ on $ \mathcal Q$.  
Trivially, any subset of a sparse collection is sparse.  
(The definition of sparseness in \cite{13062653} is a bit more general, but this is not needed herein.) 

A sparse operator is bounded on all $ L ^{p}$, and in fact, is a `\emph{positive} dyadic Calder\'on-Zygmund operator.'  
And the class is sufficiently rich to capture the norm behavior of an arbitrary Calder\'on-Zygmund operator, 
which is the wonderful observation of  Andrei Lerner: 

\begin{priorResults}\label{t:}\cite{MR3127380}*{Thm 1.1}  We have the equivalence 
\begin{equation*}
\sup _{T \in CZO} \sup _{\lVert f\rVert_{L ^{p} (\sigma )} =1} \lVert T _{\sigma } f\rVert_{L ^{p} (w)} 
\simeq  \sup _{\textup{ $ T$ is sparse}}
\sup _{\lVert f\rVert_{L ^{p} (\sigma )} =1}  
\lVert  T_\sigma f  \rVert_{L ^{p} (w)}  . 
\end{equation*}

\end{priorResults}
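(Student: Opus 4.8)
The statement is a two-sided norm comparison, so there are two inclusions to prove; it is convenient to normalize the left-hand supremum to Calder\'on--Zygmund operators with $ C_T \le 1$, the general case following by rescaling $ T \mapsto T / C_T$ (the implied constant then absorbs one universal factor). The content of ``$ \lesssim $'' is a \emph{pointwise sparse domination}: for every Calder\'on--Zygmund operator $ T$ there is, for each admissible $ f$, a sparse collection $ \mathcal S$ with $ \lvert  T _\sigma f\rvert \lesssim C_T \sum _{Q \in \mathcal S} \langle \lvert  \sigma f\rvert  \rangle_Q \mathbf 1_{Q}$ a.e. The reverse inequality ``$ \gtrsim $'' is softer and proceeds in the other direction: one realizes a given sparse operator as a bounded multiple of a genuine Calder\'on--Zygmund operator by smoothing out its kernel.

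\emph{The domination direction.} Fix $ T$ with $ C_T \le 1$ and $ f \in L ^{p} (\sigma )$; by density take $ f$ bounded with compact support, so $ \sigma f \in L ^{1}$ is compactly supported, and since the kernel vanishes for $ \lvert  x-y\rvert > \Lambda $ the function $ T _\sigma f$ is compactly supported as well. Run Lerner's local mean oscillation inequality on a cube $ Q_0 \supset \operatorname{supp} (T _\sigma f)$ large enough that $ T _\sigma f$ vanishes on more than half of $ Q_0$: for a sparse family $ \mathcal S$ of dyadic subcubes of $ Q_0$ and a median $ m$ of $ T _\sigma f$ on $ Q_0$ (which one may take to be $ 0$),
\begin{equation*}
\lvert  T _\sigma f (x)\rvert \lesssim \sum _{Q \in \mathcal S} \omega _{\lambda } (T _\sigma f ; Q)\, \mathbf 1_{Q} (x) \qquad \textup{a.e.,}
\end{equation*}
where $ \omega _\lambda (g;Q)$ is the $ \lambda $-local mean oscillation of $ g$ on $ Q$ and $ \lambda $ is a dimensional constant. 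Now estimate $ \omega _\lambda (T _\sigma f;Q)$: splitting $ \sigma f = \sigma f \mathbf 1_{3Q} + \sigma f \mathbf 1_{(3Q) ^{c}}$, the local part contributes $ \lesssim \langle \lvert  \sigma f\rvert  \rangle_{3Q}$ by the weak $ L ^{1}$ bound for $ T$ (a classical consequence of $ C_T < \infty $), while by the H\"older kernel estimate $ T (\sigma f \mathbf 1_{(3Q) ^{c}})$ is almost constant on $ Q$, with oscillation controlled by a geometric sum of averages $ \langle \lvert  \sigma f\rvert  \rangle_{3 \cdot 2 ^{k} Q}$; reabsorbing the tail contributions into the sparse structure one arrives at $ \lvert  T _\sigma f\rvert \lesssim \sum _{Q \in \mathcal S'} \langle \lvert  \sigma f\rvert  \rangle_{Q} \mathbf 1_{Q}$ for a sparse $ \mathcal S'$ (lying in one of boundedly many dyadic lattices, via the standard dilation trick). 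Taking $ L ^{p} (w)$ norms and then the supremum over $ \lVert f\rVert_{L ^{p}(\sigma)} = 1$ and over normalized $ T$ gives the bound by $ \sup _{T \textup{ sparse}} \sup _{\lVert f\rVert_{L ^{p}(\sigma)}=1} \lVert  T _\sigma f\rVert_{L ^{p}(w)}$.

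\emph{The realization direction.} For $ g \ge 0$ the sum $ \sum _{Q \in \mathcal S} \langle g \rangle_Q \mathbf 1_{Q}$ is the increasing limit of the same sums over finite subfamilies, so by monotone convergence it suffices to treat a finite sparse $ \mathcal S$. Choose $ \phi \in C ^{\infty } _c ((-1,1) ^{d})$ with $ 0 \le \phi$ and $ \phi \ge 1$ on $ [-\tfrac12,\tfrac12] ^{d}$, write $ \phi _Q (x) = \phi ((x-c_Q)/\ell _Q)$ with $ c_Q,\ell _Q$ the center and sidelength of $ Q$, and set $ K (x,y) = \sum _{Q \in \mathcal S} \lvert  Q\rvert ^{-1} \phi _Q (x)\phi _Q (y)$. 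Finiteness of $ \mathcal S$ makes $ K$ bounded and supported in $ \lvert  x-y\rvert \le \Lambda $; since only $ O_d (1)$ dyadic cubes of each scale have $ x \in \operatorname{supp}\phi _Q$, one has $ \lvert  K (x,y)\rvert \lesssim \lvert  x-y\rvert ^{-d}$ and, differentiating, $ \lvert  \nabla K (x,y)\rvert \lesssim \lvert  x-y\rvert ^{-d-1}$, so all four kernel conditions hold with dimensional $ C_K$. For $ L ^2 $-boundedness one uses $ \lvert  \int K (x,y)g (y)\,dy\rvert \lesssim \sum _{Q \in \mathcal S} \langle \lvert  g\rvert  \rangle_{3Q}\mathbf 1_{3Q}$, which is bounded on $ L ^2 $ with a dimensional constant because $ \mathcal S$ is sparse; hence $ T$ is a Calder\'on--Zygmund operator with $ C_T \lesssim _d 1$. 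Finally, for $ g \ge 0$, $ \int K (x,y)g (y)\,dy \ge \sum _{Q \in \mathcal S} \lvert  Q\rvert ^{-1}\mathbf 1_{Q}(x)\int _{Q} g = \sum _{Q \in \mathcal S} \langle g \rangle_Q \mathbf 1_{Q}(x)$; taking $ g = \sigma \lvert  f\rvert $ and using positivity of the sparse operator, $ \lVert \sum _{Q} \langle \sigma f \rangle_Q \mathbf 1_{Q}\rVert_{L ^{p}(w)} \le \lVert T _\sigma \lvert  f\rvert \rVert_{L ^{p}(w)} \le \lVert T _\sigma \rVert\, \lVert f\rVert_{L ^{p}(\sigma)}$. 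Dividing $ T$ by $ C_T$ and taking suprema yields ``$ \gtrsim $''.

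\emph{Main obstacle.} All the weight is in the domination direction, and within it in the local mean oscillation inequality together with the passage from $ \omega _\lambda $ to averages of $ \sigma f$ (equivalently, in the later stopping-time proofs of sparse domination); the realization step is a routine smoothing construction and the convergence of the two-sided bound is formal. In the present article this domination is quoted as \cite{MR3127380}*{Thm 1.1}, so in fact none of the above must be carried out in detail here.
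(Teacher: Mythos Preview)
The paper does not prove this statement; it is quoted as a prior result from Lerner \cite{MR3127380}*{Thm 1.1} and used as a black box, exactly as you observe in your final paragraph. So there is nothing to compare against on the paper's side.

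Your sketch of the domination direction is the standard outline of Lerner's argument (local mean oscillation decomposition, weak $(1,1)$ for the near part, kernel smoothness for the far part, and the dilation trick to return to dyadic cubes), and is correct at the level of detail given. Your realization direction is also sound: smoothing the indicators of a finite sparse family produces a positive kernel satisfying the size and smoothness bounds with dimensional constants, the $L^2$ bound follows from sparseness, and monotone convergence removes the finiteness restriction. One small remark: the paper's kernel class requires $\lVert K\rVert_\infty < \infty$ and compact support $\lvert x-y\rvert \le \Lambda$, and both constants $\lVert K\rVert_\infty$ and $\Lambda$ in your construction depend on the particular finite $\mathcal S$; this is harmless because neither enters $C_T$, and the supremum on the left ranges over all such operators, but it is worth noting that the individual CZO you build is not uniform in $\mathcal S$ even though $C_T$ is.
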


We need this consequence of the two weight theory 
of E. Sawyer \cite{MR930072}. 
A detailed study of the dyadic setting is in \cite{09113437}, and an efficient proof is given at the end of \cite{12123840}.

\begin{priorResults}\label{l:test} \cite{MR930072} For a sparse operator $ T$ with sparse collection $ \mathcal Q$, the   norm is approximately 
\begin{equation*}
\lVert T _{\sigma } \::\: L ^{p} (\sigma ) \to L ^{p } (w )\rVert_{}   \simeq N _{\mathcal Q}
\end{equation*}
where $ N _{\mathcal Q}$ is the best constant $ N$ in the inequalities 
\begin{equation}\label{e:testing}
\begin{split}
\int _{Q_0}  \Bigl\lvert   \sum_{Q\in \mathcal Q  \::\: Q\subset Q_0}  \langle \sigma \rangle_Q  \mathbf 1_{Q} \Bigr\rvert ^p \; d w 
\lesssim  N _{\mathcal Q} ^{p} \sigma  (Q_0), \qquad \textup{$ Q_0 \in \mathcal Q$,} 
\\
\int _{Q_0}  \Bigl\lvert   \sum_{Q\in \mathcal Q  \::\: Q\subset Q_0}  \langle w \rangle_Q  \mathbf 1_{Q} \Bigr\rvert ^{p'} \; d \sigma  
\lesssim  N _{\mathcal Q} ^{p'} w  (Q_0), \qquad \textup{$ Q_0 \in \mathcal Q$.} 
\end{split}
\end{equation}
\end{priorResults}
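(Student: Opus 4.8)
The plan is to prove the two inequalities behind $\lVert T_\sigma \colon L^p(\sigma)\to L^p(w)\rVert \simeq N_{\mathcal Q}$ separately. The inequality $N_{\mathcal Q}\lesssim \lVert T_\sigma\rVert$ is a short testing argument: the kernel $\sum_{Q\in\mathcal Q}\lvert Q\rvert^{-1}\mathbf 1_Q(x)\mathbf 1_Q(y)$ of $T$ is symmetric, so $\langle T_\sigma f,g\rangle_w = \langle \sigma f, T(wg)\rangle = \langle f, T_w g\rangle_\sigma$ (with $T_w h:=T(wh)$), and hence the adjoint of $T_\sigma\colon L^p(\sigma)\to L^p(w)$ is $T_w\colon L^{p'}(w)\to L^{p'}(\sigma)$, of the same norm. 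Applying $\lVert T_\sigma f\rVert_{L^p(w)}\le \lVert T_\sigma\rVert\,\lVert f\rVert_{L^p(\sigma)}$ to $f=\mathbf 1_{Q_0}$, $Q_0\in\mathcal Q$, and using $\langle\sigma\mathbf 1_{Q_0}\rangle_Q=\langle\sigma\rangle_Q$ for $Q\subset Q_0$ together with positivity of the sum defining $T_\sigma$, one gets $T_\sigma\mathbf 1_{Q_0}\ge \sum_{Q\subset Q_0}\langle\sigma\rangle_Q\mathbf 1_Q$ on $Q_0$; this is the first line of \eqref{e:testing} with $N=\lVert T_\sigma\rVert$, and testing $T_w$ on $\mathbf 1_{Q_0}$ gives the second.

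For the reverse inequality assume \eqref{e:testing} with constant $N=N_{\mathcal Q}$. By monotone convergence one may take $\mathcal Q$ finite, and since $T_\sigma$ is positive, duality reduces everything to bounding
\begin{equation*}
\mathcal B(f,g) := \langle T_\sigma f,g\rangle_w = \sum_{Q\in\mathcal Q}\langle\sigma f\rangle_Q\,\langle wg\rangle_Q\,\lvert Q\rvert , \qquad f,g\ge 0,
\end{equation*}
by $N\,\lVert f\rVert_{L^p(\sigma)}\lVert g\rVert_{L^{p'}(w)}$. Write $\langle f\rangle_Q^\sigma=\sigma(Q)^{-1}\int_Q f\,\sigma\,dx$. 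I would build two stopping (principal cube) families: $\mathcal F$ adapted to $(f,\sigma)$ --- the children of $F\in\mathcal F$ being the maximal $Q\in\mathcal Q$, $Q\subsetneq F$, with $\langle f\rangle_Q^\sigma>2\langle f\rangle_F^\sigma$ --- and $\mathcal G$ adapted to $(g,w)$ in the same way. As subsets of $\mathcal Q$ these are sparse, and they carry the standard features: with $\pi_{\mathcal F}Q$ the minimal member of $\mathcal F$ containing $Q$, one has $\langle f\rangle_Q^\sigma\le 2\langle f\rangle_{\pi_{\mathcal F}Q}^\sigma$; the children $F'$ of $F$ satisfy $\sum_{F'}\sigma(F')\le\tfrac12\sigma(F)$ (immediate from $\langle f\rangle_{F'}^\sigma>2\langle f\rangle_F^\sigma$), whence the packing bound $\sum_{F'\in\mathcal F,\,F'\subseteq F}\sigma(F')\le 2\sigma(F)$, and therefore the dyadic Carleson embedding theorem gives $\sum_{F\in\mathcal F}(\langle f\rangle_F^\sigma)^p\sigma(F)\lesssim_p\lVert f\rVert_{L^p(\sigma)}^p$; the analogues hold for $\mathcal G$, $g$, $w$, $p'$.

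Now split $\mathcal B=\mathcal B_1+\mathcal B_2$ according to whether $\pi_{\mathcal G}Q\subseteq\pi_{\mathcal F}Q$ or $\pi_{\mathcal F}Q\subsetneq\pi_{\mathcal G}Q$; the two pieces are interchanged by the symmetry $\sigma\leftrightarrow w$, $f\leftrightarrow g$, $p\leftrightarrow p'$ (which swaps the two lines of \eqref{e:testing}), so it is enough to estimate $\mathcal B_1$. In $\mathcal B_1$ one freezes the $f$-averages on the $\mathcal F$-coronas and then, inside each $\mathcal F$-corona, groups the cubes by the $\mathcal G$-coronas that meet it. The $\sigma$-content of each resulting block is controlled by the \emph{localized} form of the first line of \eqref{e:testing}: for any $\mathcal A\subseteq\mathcal Q$,
\begin{equation*}
\Bigl\lVert \textstyle\sum_{Q\in\mathcal A}\langle\sigma\rangle_Q\mathbf 1_Q\Bigr\rVert_{L^p(w)} \le N\,\sigma\bigl(\textstyle\bigcup\mathcal A\bigr)^{1/p},
\end{equation*}
which follows by decomposing $\mathcal A$ into the disjoint subtrees rooted at its (pairwise disjoint, hence $\mathcal Q$-)maximal cubes and applying \eqref{e:testing} to each. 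Combined with H\"older's inequality this yields, block by block, factors $\sigma(\cdot)^{1/p}$ paired with $\langle f\rangle^\sigma$-averages over $\mathcal F$ and factors $w(\cdot)^{1/p'}$ paired with $\langle g\rangle^w$-averages over $\mathcal G$; summing the former against the $\mathcal F$-Carleson embedding and the latter against the $\mathcal G$-Carleson embedding produces $\mathcal B_1\lesssim N\lVert f\rVert_{L^p(\sigma)}\lVert g\rVert_{L^{p'}(w)}$. By symmetry $\mathcal B_2$ obeys the same bound, and $\lVert T_\sigma\rVert\lesssim N_{\mathcal Q}$ follows.

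The one genuinely delicate point is the summation in the last step --- the ``parallel corona'' bookkeeping. Neither stopping family is a Carleson family with respect to the \emph{other} weight, so one cannot simply apply H\"older across a whole corona and then sum the resulting norms over all stopping cubes: quantities such as $\sum_{F\in\mathcal F}\lVert g\mathbf 1_F\rVert_{L^{p'}(w)}^{p'}=\int\bigl(\sum_F\mathbf 1_F\bigr)g^{p'}\,dw$ need not be finite. Making the argument close requires splitting each $\mathcal F$-corona carefully into its $\mathcal G$-stopping pieces and applying the localized testing bound above on the support of each piece, so that all the $\sigma$-mass is ultimately metered by the ($\sigma$-Carleson) family $\mathcal F$ and all the $w$-mass by $\mathcal G$; this is exactly the argument carried out in \cite{09113437} and in \cite{12123840}. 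Everything else --- the stopping construction, the Carleson embedding theorem, the symmetry reduction, and the necessity half --- is routine.
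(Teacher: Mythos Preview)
The paper does not give its own proof of this statement: it is quoted as a prior result, with the remark that a detailed dyadic treatment appears in \cite{09113437} and an efficient proof at the end of \cite{12123840}. Your sketch is precisely the parallel-corona/principal-cubes argument of those references --- two stopping families, the split by $\pi_{\mathcal G}Q\subseteq\pi_{\mathcal F}Q$ versus the reverse, localized testing on the maximal roots of each block, and Carleson embedding to close --- so it agrees with the approach the paper points to. Your assessment of which step is delicate (the cross-corona summation) and your deferral to \cite{09113437,12123840} for that bookkeeping are both accurate.
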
 

\section{Proof of Main Theorem} \label{s:proof}

We consider the first group of testing inequalities in \eqref{e:testing}, and the proof given will, by duality, apply to the second testing inequalities.  
Assume for the remainder of the argument that $ Q_0$ is a fixed cube, 
$ \mathcal Q$ is a sparse collection of cubes contained in $ Q_0$,  $ \mathcal Q$ contains $ Q_0$. 
In fact, we strengthen the sparseness condition \eqref{e:1/2} to 
\begin{equation} \label{e:1/10}
\Bigl\lvert \bigcup _{Q' \in \mathcal Q \::\: Q'\subsetneq Q} Q'  \Bigr\rvert < 10 ^{-p } \lvert  Q\rvert, \qquad Q\in \mathcal Q .
\end{equation}
This can be accomplished by dividing $ \mathcal Q$ meeting the condition \eqref{e:1/2} into a bounded number (as a function of $1< p < \infty $) 
of subcollections.  
Set 
\begin{equation} \label{e:rho}
\rho (Q) := \frac { \langle \sigma ^{1/{p'}}  \rangle_{\overline A, Q}} {\langle \sigma  \rangle_Q ^{1/{p'}}} \ge 1 . 
\end{equation}
These are the ratios that appear in the argument of  $ \varepsilon _p (t)$. 
In addition, we are free to assume that $  \varepsilon _{p} (t) $ does not grow faster than a power of $ \log t$ at infinity.  

Assume that for all  $ Q\in \mathcal Q$, 
\begin{equation}\label{e:alpha} 
\varepsilon _p (\rho (Q))\langle \sigma ^{1/p'} \rangle _{\overline A,  Q} \langle w \rangle_Q ^{1/p}
\simeq 
\varepsilon _p (\rho (Q_0))\langle \sigma ^{1/p'} \rangle _{\overline A,  Q_0} \langle w \rangle_{Q_0} ^{1/p} := \alpha . 
\end{equation}
We will show that 
\begin{equation}
\int _{Q_0} 
 \Bigl\lvert   \sum_{Q\in \mathcal Q   }  \langle \sigma \rangle_Q  \mathbf 1_{Q} \Bigr\rvert ^p \; d w  
 \lesssim \alpha ^{p} \sigma (Q_0).  
\end{equation}
This is enough to conclude the first of the two testing inequalities in \eqref{e:testing}, by a trivial summation of $ \alpha $ over 
a geometrically decreasing sequence of values. 

We invoke a version of the \emph{parallel corona}, originating in \cite{12014319}, also see the last page of \cite{12123840}.
The first step is proceed to \eqref{e:testing} by duality. Thus,  for $ g$ a 
 non-negative function in $ L ^{p'} (Q_0, w)$, we will bound the form 
\begin{equation}\label{e:alpha'}
 \sum_{Q\in \mathcal Q   }  \langle \sigma \rangle_Q  \langle w \rangle_{Q}
 \langle g \rangle ^{w} _{Q} \cdot \lvert  Q\rvert \lesssim \alpha \sigma (Q_0) ^{1/p} \lVert g\rVert_{L ^ {p'} (w)}. 
\end{equation}
Here $\langle g \rangle ^{w} _{Q} = w (Q) ^{-1} \int _{Q} g \; w (dx) $.  

Construct stopping data for $ g$ as follows.   For any cube $ Q \in \mathcal Q$, set $ \textup{ch} _{\mathcal T}(Q)$ to be the largest subcubes $Q'\in \mathcal Q$, contained in $ Q$ such that $ \langle g \rangle ^{w} _{Q'} > 10 \langle g \rangle ^{w} _{Q}  $.  
Then, set $ \mathcal T := \bigcup _{k=0} ^{\infty } \mathcal T _{k}$ where $\mathcal T_0 := \{Q_0\}$,and inductively 
$ \mathcal T _{k+1} := \bigcup _{Q\in \mathcal T _{k}} \textup{ch} _{\mathcal T} (Q)$.  
Define for $ T\in \mathcal T$,  
\begin{equation}\label{e:Tsharp}
T _{\sharp} := T \setminus \bigcup _{T'\in \textup{ch} _{\mathcal T} (T) } T' 
\end{equation}
It follows that $ w (T _{\sharp}) \ge \tfrac 12 w (T)$.  Define 
\begin{equation}\label{e:gT}
 g_T = \langle g \rangle ^{w} _{T} \mathbf 1_{T _{\sharp}} + 
 \sum_{T'\in \textup{ch} _{\mathcal T} (T) } \langle g \rangle ^{w} _{T'} \mathbf 1_{T'}.  
\end{equation}
Let $ Q ^{t} $ be the minimal element of $ \mathcal T$ that contains $ Q$. Observe that if $ Q ^{t}=T$, then  
$ \langle g \rangle ^{w}_Q = \langle g_T\rangle_{Q} ^{w}$. Moreover, by the maximal function estimate 
\begin{align} \label{e:gT<}
 \sum_{T\in \mathcal T}  (\langle g \rangle ^{w} _{T}) ^{p'} w(T) 
\lesssim \lVert g\rVert_{L ^ {p'} (w)}  ^{p'}.
\end{align}

\smallskip 

The construction of stopping data for $ \sigma $ is \emph{not} dual. For any $ Q\in \mathcal Q$, set  $ \textup{ch} _{\mathcal S}(Q)$ to the 
largest subcubes 
$Q'\in \mathcal Q$, contained in $ Q$ such that $ \langle \sigma \rangle  _{Q'} > 10 \langle \sigma \rangle  _{Q}  $ \emph{or $ Q'\in \mathcal T$.}
(We restart the stopping cubes at each member of $ \mathcal T$!) 
Then, set $ \mathcal S := \bigcup _{k=0} ^{\infty } \mathcal S _{k}$ where $\mathcal S_0 := \{Q_0\}$,and inductively 
$ \mathcal S _{k+1} := \bigcup _{Q\in \mathcal S _{k}} \textup{ch} _{\mathcal S} (Q)$.  
Let $ Q ^{s} $ be the minimal element of $ \mathcal S$ that contains $ Q$.

   This is then the basic estimate.  

\begin{lemma}\label{l:S} 
 There holds, uniformly in $ T\in \mathcal T$, 
\begin{equation} \label{e:S}
\sum_{Q \::\: Q ^{t} = T}
\langle \sigma  \rangle_Q \langle w \rangle_Q\cdot \lvert  Q\rvert 
\lesssim 
\alpha 
\Bigl[ 
\sum_{Q \::\: Q ^{t} = T } 
\langle \sigma ^{1/p}  \rangle _{A, Q}  ^{p} \lvert  Q\rvert 
\Bigr] ^{1/p} w (T) ^{1/p'} . 
\end{equation}
\end{lemma}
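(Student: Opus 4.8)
I would prove the first testing inequality in \eqref{e:testing}, which gives \eqref{e:S} by duality, in two moves: two applications of Hölder's inequality together with the normalisation \eqref{e:alpha} reduce everything to a Carleson-type embedding for $ w$, and that embedding is where the hypothesis on $ \varepsilon _p$ is spent.

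First I would fix $ T \in \mathcal T$ and, since $ (Q, dx/\lvert Q\rvert )$ is a probability space, apply the generalised H\"older inequality \eqref{e:hi} to the splitting $ \sigma = \sigma ^{1/p}\cdot \sigma ^{1/p'}$ to get $ \langle \sigma \rangle _Q \lesssim \langle \sigma ^{1/p}\rangle _{A,Q}\langle \sigma ^{1/p'}\rangle _{\overline A, Q}$ for every cube. Feeding this into the left side of \eqref{e:S} and then using \eqref{e:alpha}, which by the definition \eqref{e:rho} of $ \rho $ reads $ \langle \sigma ^{1/p'}\rangle _{\overline A, Q}\langle w \rangle _Q ^{1/p}\simeq \alpha \,\varepsilon _p(\rho (Q)) ^{-1}$ for all $ Q \in \mathcal Q$, gives
\begin{equation*}
\sum _{Q \::\: Q ^{t} = T}\langle \sigma \rangle _Q \langle w \rangle _Q \lvert Q\rvert
\lesssim \alpha \sum _{Q \::\: Q ^{t} = T}\langle \sigma ^{1/p}\rangle _{A,Q}\,\varepsilon _p(\rho (Q)) ^{-1}\langle w \rangle _Q ^{1/p'}\lvert Q\rvert .
\end{equation*}
Then I would apply H\"older in the sum with conjugate exponents $ p, p'$, pairing $ \langle \sigma ^{1/p}\rangle _{A,Q}\lvert Q\rvert ^{1/p}$ against $ \varepsilon _p(\rho (Q)) ^{-1}\langle w \rangle _Q ^{1/p'}\lvert Q\rvert ^{1/p'}$: the first factor is then exactly the bracket on the right of \eqref{e:S}, so the proof is complete once one has the Carleson-type estimate
\begin{equation}\label{e:car}
\sum _{Q \::\: Q ^{t} = T}\varepsilon _p(\rho (Q)) ^{-p'}w (Q) \lesssim w (T).
\end{equation}

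The estimate \eqref{e:car} is the real content, and I would isolate it as a separate lemma (it is the place flagged as Lemma~\ref{l:3} in the introduction). Using \eqref{e:alpha} once more to write $ w (Q) \simeq \alpha ^{p}\varepsilon _p(\rho (Q)) ^{-p}\langle \sigma ^{1/p'}\rangle _{\overline A, Q} ^{-p}\lvert Q\rvert $, and the same for $ w (T)$, turns \eqref{e:car} into a statement about $ \sigma $, $ \overline A$, $ \varepsilon _p$ and the stopping data alone. I would decompose the sum over the $ \mathcal S$-trees $ \{Q \::\: Q ^{s} = S\}$ with $ S ^{t} = T$. Because $ \mathcal S$ is restarted at the cubes of $ \mathcal T$, between $ T$ and such an $ S$ the average $ \langle \sigma \rangle $ jumps by a factor $ 10$ at every generation, so the $ \mathcal S$-cubes of the $ k$-th generation below $ T$ have total Lebesgue measure $ \le 10 ^{-k}\lvert T\rvert $ and their roots satisfy $ \langle \sigma \rangle _S \ge 10 ^{k}\langle \sigma \rangle _T$; summing this geometric series handles the ``macroscopic'' part. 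Inside a single $ \mathcal S$-tree one has $ \langle \sigma \rangle _Q \le 10 \langle \sigma \rangle _S$ and sparseness, and the leftover sum is absorbed by the damping: a large $ \rho (Q)$ both depresses $ \langle w \rangle _Q$ through \eqref{e:alpha} and is penalised by $ \varepsilon _p$, and the dyadic decomposition of the range of $ \rho $ converges precisely because $ \int _1 ^{\infty }\varepsilon _p(t) ^{-p'}\frac {dt}t = 1$. This is the only point at which the $ \varepsilon _p$ hypothesis is used, in the transparent and seemingly unavoidable manner mentioned in the introduction.

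The main obstacle is \eqref{e:car}; the reduction preceding it is routine. The subtlety there is to organise the interaction of the two stopping families — $ \mathcal S$ is built from $ \sigma $ but restarted on $ \mathcal T$, whereas \eqref{e:car} is summed over the $ \mathcal T$-tree of $ T$ — and then to convert the bare integral condition on $ \varepsilon _p$ into an honest telescoping bound along the $ \mathcal S$-generations. Conceptually, the key point is that \eqref{e:alpha}, which ties $ w$ to $ \sigma $ simultaneously on \emph{every} cube of $ \mathcal Q$, is already strong enough — through the monotonicity of $ w$ as a measure — to force the local balance between $ \langle w \rangle $ and $ \langle \sigma \rangle $ that makes \eqref{e:car} true.
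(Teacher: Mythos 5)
The reduction you propose is clean, and the first two Hölder steps are fine: they are in fact \eqref{e:zs2w} in the paper, so up to the point where you invoke the claimed Carleson estimate
\begin{equation}\label{e:carclaim}
\sum_{Q \::\: Q^{t}=T}\varepsilon_p(\rho(Q))^{-p'}\,w(Q)\;\lesssim\;w(T)
\end{equation}
your computation is correct. The problem is that \eqref{e:carclaim}, taken over \emph{all} cubes with $Q^{t}=T$, is false in general, and the paper's three--case decomposition is designed precisely to avoid having to prove it. The damping $\varepsilon_p(\rho(Q))^{-p'}$ only helps when $\rho(Q)$ is large; but the dangerous regime is the opposite one, where $\rho(Q)$ stays bounded while $\langle\sigma\rangle_Q$ collapses. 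In that regime \eqref{e:alpha} forces $\langle w\rangle_Q\simeq\alpha^{p}\rho(Q)^{-p}\langle\sigma\rangle_Q^{-p/p'}$ to blow up, and $\varepsilon_p(\rho(Q))^{-p'}$ is bounded below, so no decay is available. Concretely, take a nested chain $T\supset Q_1\supset Q_2\supset\cdots$ in $\mathcal Q$ with $|Q_{j+1}|=10^{-p}|Q_j|$, let $\sigma$ be essentially constant on each annulus $Q_j\setminus Q_{j+1}$ with value decaying geometrically in $j$ (so $\rho(Q_j)\simeq 1$ for every $j$), and put all the $w$-mass of $T$ in $\bigcap_j Q_j$. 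Then $w(Q_j)=w(T)$ for all $j$, the normalisation \eqref{e:alpha} can be arranged to hold along the chain, and the left side of \eqref{e:carclaim} diverges. These are exactly the cubes the paper isolates in Lemma~\ref{l:2} (the regime $\langle\sigma\rangle_Q^{1/p}<2^{-i_S(Q)/2}\langle\sigma\rangle_S^{1/p}$), and the point of Remark~\ref{r:pass} is that one cannot pass to $w(Q)$ there; instead one integrates the pointwise sum $\sum\langle\sigma\rangle_Q\mathbf 1_Q$ against $dw$ and uses the geometric growth of $\langle\sigma\rangle_Q$ to push the $p$th power inside the sum --- an $\ell^1\to\ell^p$ step that your Hölder application gives away at the outset.

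So the gap is not in the heuristic for \eqref{e:carclaim} (which does work on the ``type 3'' cubes, where the conditions \eqref{e:3} force $\rho(Q)\lesssim 2^{-i_T(Q)/2}\rho(T)$ and \eqref{e:sum} follows), but in applying one Hölder step to the entire sum before the three regimes are separated. To rescue the argument along your lines, one must first remove the cubes of Lemma~\ref{l:1} (where $w$ is directly Carleson thanks to \eqref{e:1/10}) and of Lemma~\ref{l:2} (where the $p$th-power trick is mandatory), and only on the remaining cubes does your two-Hölder reduction to a weighted $w$-Carleson estimate go through --- which then \emph{is} the paper's Lemma~\ref{l:3}. Your description of the decay mechanism for the Carleson piece (``large $\rho$ depresses $\langle w\rangle_Q$ and is penalised by $\varepsilon_p$'') is the right intuition for that remaining piece, but by itself it does not control the small-$\rho$/small-$\sigma$ cubes, and that is where your proposal breaks.
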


\begin{proof}[Proof of \eqref{e:alpha'}] 
Denoting  $ Q ^{\ast} := (Q ^{s} , Q ^{t})$,  observe that the left side of \eqref{e:alpha'} is 
\begin{align}  \label{e:A}
\sum_{S \in \mathcal S} \sum_{T\in \mathcal T} 
\sum_{Q \::\: Q ^{\ast} = (S,T)} 
 \langle \sigma \rangle_Q   \langle w \rangle_Q \langle g \rangle ^{w} _{Q} \cdot \lvert  Q\rvert. 
\end{align}
The average $  \langle g \rangle ^{w} _{Q}  = \langle g_T \rangle ^{w} _{Q}$.  
We must have $ S\subset T$ since $ \mathcal T \subset \mathcal S$, but also 
we can restrict to $ S ^{t}=T$, for otherwise $ Q ^{t} \subsetneq T$.  
Therefore, the sum above is dominated by 
\begin{align*}
\eqref{e:A} & 
= \sum_{T\in \mathcal T}  \langle g_T \rangle ^{w} _{Q}  
\sum_{Q \::\: Q ^{t} = T}
\langle \sigma  \rangle_Q \langle w \rangle_Q\cdot \lvert  Q\rvert 
\\& 
\lesssim 
\alpha  \sum_{T\in \mathcal T}    \langle g_T \rangle ^{w} _{Q} 
\Bigl[ 
\sum_{Q \::\: Q ^{t} = T} 
\langle\sigma^{1/p}  \rangle _{A, Q} ^{p}  \lvert  Q\rvert 
\Bigr] ^{1/p} w (T) ^{1/p'} 
\end{align*}
Here, of course, we have applied \eqref{e:S}.  
Apply H\"older's inequality.  
We have   by sparseness and  P{\'e}rez' maximal function estimate \eqref{e:cmf}, 
\begin{align*}
\sum_{Q\in \mathcal Q}  
 \langle\sigma^{1/p}  \rangle _{A, Q}  ^p \lvert  Q \rvert  
 & \lesssim  \int _{Q_0}\sup _{Q} \langle\sigma^{1/p}  \rangle _{A, Q}  ^p \mathbf 1_{Q}  \; dx \lesssim \sigma (Q_0).  
\end{align*}
(Recall that $ \mathcal T \subset \mathcal S$.)
And, on the other, the estimate \eqref{e:gT<}. 

\end{proof}

The obstacle to an easy proof of \eqref{e:S} is that the cubes $ Q\in \mathcal Q$ need not be $ w$-Carleson.  
We give the proof in three steps, in which we sum only over certain cubes; the first two are very easy.  
Write $ \mathcal Q_0 = \{Q_0\}$, and inductively set $ \mathcal Q_{k+1}$ to be the maximal elements  $Q\in \mathcal Q  $
which are strictly contained in some $ Q'\in \mathcal Q _{k}$.  The basic fact, a consequence of \eqref{e:1/10}, is that for all $ k$, $ Q\in \mathcal Q_k$, 
\begin{equation}\label{e:Qsmall}
\sum_{\substack{Q'\in \mathcal Q _{k+j}\\ Q'\subset Q  }}  \lvert  Q'\rvert \lesssim 10 ^{-p j} \lvert  Q\rvert, \qquad j\in \mathbb N   
\end{equation}

We write $ i _{T} (Q)=k$ if for some integer $ \ell $, $ T\in \mathcal Q _{\ell }$ and $Q\subset T$ with $ Q\in \mathcal Q _{\ell +k}$. 
Our first easy estimate is as follows: We require that the average value of $ w$ is not too large in the sum below.  

\begin{lemma}\label{l:1} 
There holds, uniformly in $ T\in \mathcal T$, 
\begin{equation} \label{e:S1}
 \sum_{\substack{Q \::\: Q ^{t} = T \\ \langle w \rangle_Q  ^{1/p}< 2 ^{2 i _{T} (Q)} \langle w \rangle_T ^{1/p} }}
\langle \sigma  \rangle_Q \langle w \rangle_Q\cdot \lvert  Q\rvert 
\lesssim 
\alpha 
\Bigl[ 
\sum_{Q \::\: Q ^{t} = T } 
\langle\sigma^{1/p}  \rangle _{A, Q} ^{p} \lvert  Q\rvert 
\Bigr] ^{1/p} w (T) ^{1/p'} . 
\end{equation}
\end{lemma}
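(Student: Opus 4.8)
The plan is to reduce the sum over $Q$ with $Q^t=T$ to a sum over the stopping cubes of $\mathcal S$ sitting inside $T$, and then apply H\"older's inequality in the form dictated by the right-hand side of \eqref{e:S1}. First I would note that for $Q$ with $Q^t=T$, writing $S=Q^s$, the $\sigma$-stopping construction forces $\langle \sigma\rangle_Q \le 10\langle\sigma\rangle_S$, so that
\begin{equation*}
\langle\sigma\rangle_Q\langle w\rangle_Q\cdot\lvert Q\rvert \lesssim \langle\sigma\rangle_S\langle w\rangle_Q\cdot\lvert Q\rvert.
\end{equation*}
The hypothesis $\langle w\rangle_Q^{1/p}< 2^{2i_T(Q)}\langle w\rangle_T^{1/p}$ lets me replace $\langle w\rangle_Q$ by $2^{2p\,i_T(Q)}\langle w\rangle_T$ at the cost of a geometric factor, which will be absorbed after summing over generations via \eqref{e:Qsmall}. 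Indeed, for fixed $S\in\mathcal S$ with $S\subset T$,
\begin{equation*}
\sum_{\substack{Q\::\: Q^s=S\\ Q^t=T}} 2^{2p\,i_T(Q)}\lvert Q\rvert
\;=\;\sum_{j\ge 0} 2^{2p(i_T(S)+j)}\!\!\sum_{\substack{Q\::\: Q^s=S\\ i_T(Q)=i_T(S)+j}}\!\!\lvert Q\rvert
\;\lesssim\; 2^{2p\,i_T(S)}\lvert S\rvert\sum_{j\ge 0}2^{2pj}10^{-pj}\;\lesssim\; 2^{2p\,i_T(S)}\lvert S\rvert,
\end{equation*}
using $10^{-p}<2^{-2p}$ and \eqref{e:Qsmall}. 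But then the factor $2^{2p\,i_T(S)}$ is itself a problem if it is not controlled; here I would use that $S$ is a $w$-stopping or $\sigma$-stopping cube to bound it. Actually the cleaner route is to keep the weight as $\langle w\rangle_Q$ and estimate directly: since $\mathcal Q$ satisfies \eqref{e:1/10}, for fixed $S$ the cubes $Q$ with $Q^s=S$ and $i_T(Q)=i_T(S)+j$ have total measure $\lesssim 10^{-pj}\lvert S\rvert$, and on each such $Q$ we have $\langle w\rangle_Q\lvert Q\rvert = w(Q)\le w(S)$, so $\sum_{Q\,:\,Q^s=S} \langle w\rangle_Q\lvert Q\rvert$ telescopes against the generation count with a bounded constant.

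Granting that reduction, the sum in \eqref{e:S1} is controlled by $\sum_{S\in\mathcal S,\,S\subset T}\langle\sigma\rangle_S\, w(S)$, times an absolute constant. Now I would invoke the defining inequality $\langle\sigma\rangle_S\le \langle\sigma^{1/p}\rangle_{A,S}^{p}$ (the $L^p$ average is dominated by any Orlicz $A$-average since $A(t)\ge t^p$ up to normalization — more precisely $\langle\sigma\rangle_S^{1/p}=\langle\sigma^{1/p}\cdot\sigma^{1/p'}\rangle_S^{1/p}\lesssim\langle\sigma^{1/p}\rangle_{A,S}$ by generalized H\"older, but here the bare bound $\langle\sigma\rangle_S\lesssim\langle\sigma^{1/p}\rangle_{A,S}^p$ suffices). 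This gives
\begin{equation*}
\sum_{S\,:\,S\subset T}\langle\sigma\rangle_S\,w(S)
\;\le\;\sum_{S\,:\,S\subset T}\langle\sigma^{1/p}\rangle_{A,S}^{p}\,\langle w\rangle_S\,\lvert S\rvert.
\end{equation*}
Here I split off the $w$-factor and use that the $\langle w\rangle_S$ over $S\in\mathcal S$ with $S\subset T$ are, by the $\mathcal T$-stopping construction, comparable to $\langle w\rangle_T$ along chains between consecutive $\mathcal T$-cubes; more directly, applying H\"older in $Q$ (with exponents $p$ and $p'$) to the original sum $\sum\langle\sigma\rangle_Q\langle w\rangle_Q\lvert Q\rvert$ written as $\sum(\langle\sigma^{1/p}\rangle_{A,Q}\lvert Q\rvert^{1/p})\cdot(\langle\sigma\rangle_Q\langle\sigma^{1/p}\rangle_{A,Q}^{-1}\langle w\rangle_Q\lvert Q\rvert^{1/p'})$ is the wrong grouping — instead I pull out $\alpha$ first. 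By the definition of $\alpha$ in \eqref{e:alpha}, $\langle\sigma^{1/p'}\rangle_{\overline A,Q}\langle w\rangle_Q^{1/p}\lesssim \alpha/\varepsilon_p(\rho(Q))\le\alpha$, so
\begin{equation*}
\langle\sigma\rangle_Q\langle w\rangle_Q\lvert Q\rvert
=\bigl(\langle\sigma^{1/p'}\rangle_{\overline A,Q}\langle w\rangle_Q^{1/p}\bigr)^{p}\cdot\frac{\langle\sigma\rangle_Q}{\langle\sigma^{1/p'}\rangle_{\overline A,Q}^{p}}\lvert Q\rvert
\lesssim \alpha^{p}\,\frac{\langle\sigma\rangle_Q}{\langle\sigma^{1/p'}\rangle_{\overline A,Q}^{p}}\lvert Q\rvert
=\alpha^{p}\rho(Q)^{-p}\langle\sigma\rangle_Q\lvert Q\rvert\le\alpha^p\langle\sigma\rangle_Q\lvert Q\rvert,
\end{equation*}
which is too lossy. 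The correct bookkeeping, and the one I would actually carry out, is: bound $\langle w\rangle_Q\lvert Q\rvert=w(Q)$ and use the generational decay \eqref{e:Qsmall} to sum $w(Q)$ over $Q$ with $Q^s=S$ down to a constant times $w(S)$, absorbing the weight; then bound $\langle\sigma\rangle_S\,w(S)\le \alpha^{p}\langle\sigma^{1/p}\rangle_{A,S}^{p}\lvert S\rvert\cdot\langle w\rangle_S^{1-p}\cdots$ — no: simply write $\langle\sigma\rangle_S w(S)=\langle\sigma\rangle_S\langle w\rangle_S\lvert S\rvert\lesssim\alpha^p\langle\sigma^{1/p}\rangle_{A,S}^p\lvert S\rvert$ when $\rho(S)\ge 1$ makes $\langle\sigma\rangle_S\le\langle\sigma^{1/p}\rangle_{A,S}^p$ and $\langle\sigma\rangle_S\langle w\rangle_S^{p'}\cdots$. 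In short: the output is $\lesssim\alpha^{p}\sum_{S\subset T}\langle\sigma^{1/p}\rangle_{A,S}^{p}\lvert S\rvert$, and since $\mathcal S\subset\mathcal Q$, this is at most $\alpha^p\sum_{Q\,:\,Q^t=T}\langle\sigma^{1/p}\rangle_{A,Q}^p\lvert Q\rvert$; finally a single application of H\"older's inequality against $w(T)=\langle w\rangle_T\lvert T\rvert$, using $w(S)\le w(T)$, converts the $\alpha^p$ and one factor of the $\sigma$-sum into the bracketed $1/p$ power times $w(T)^{1/p'}$ as in \eqref{e:S1}.

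The main obstacle is the one the lemma is designed to isolate: the cubes of $\mathcal Q$ are not $w$-Carleson, so one cannot freely sum $w(Q)$ over nested $Q\in\mathcal Q$. The restriction $\langle w\rangle_Q^{1/p}<2^{2i_T(Q)}\langle w\rangle_T^{1/p}$ is exactly what tames this: it bounds $w(Q)\le 2^{2p\,i_T(Q)}\langle w\rangle_T\lvert Q\rvert$, after which the super-geometric decay $\lvert Q\rvert$-decay $10^{-pj}$ in \eqref{e:Qsmall} beats the $2^{2pj}$ growth and makes the $w$-mass summable along each generation tower. Everything else — passing from $\langle\sigma\rangle_Q$ to $\langle\sigma^{1/p}\rangle_{A,Q}^p$, extracting $\alpha$ via the definition \eqref{e:alpha} together with $\varepsilon_p\ge 1$, and the closing H\"older step against $w(T)^{1/p'}$ — is routine and will be dispatched quickly. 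The genuinely hard part, the complementary regime $\langle w\rangle_Q^{1/p}\ge 2^{2i_T(Q)}\langle w\rangle_T^{1/p}$, is postponed to the next lemmas and is where the integrability hypothesis $\int_1^\infty\varepsilon_p(t)^{-p'}\,dt/t<\infty$ will be consumed.
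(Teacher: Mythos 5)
Your proposal does not reach a correct proof, and the stopping‐cube reduction it leans on is based on a false inequality. You assert $\langle\sigma\rangle_S\le\langle\sigma^{1/p}\rangle_{A,S}^{p}$ ``since $A(t)\ge t^p$ up to normalization.'' But $A\in B_p$ (see \eqref{e:Bp}) forces $A(t)$ to be \emph{smaller} than $t^p$ at infinity, not larger; the paper says this explicitly just after Theorem~\ref{t:cmf}. Consequently the Luxembourg norm satisfies $\langle\sigma^{1/p}\rangle_{A,Q}\lesssim\langle\sigma\rangle_{Q}^{1/p}$, i.e.\ exactly the reverse of your inequality. The generalized H\"older fallback you offer in the same parenthesis is also wrong: \eqref{e:hi} yields $\langle\sigma\rangle_Q\lesssim\langle\sigma^{1/p}\rangle_{A,Q}\langle\sigma^{1/p'}\rangle_{\overline A,Q}$, not $\langle\sigma\rangle_Q^{1/p}\lesssim\langle\sigma^{1/p}\rangle_{A,Q}$. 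With that inequality reversed, the chain $\sum_{Q}\cdots\lesssim\sum_{S}\langle\sigma\rangle_S w(S)\lesssim\alpha^p\sum_S\langle\sigma^{1/p}\rangle_{A,S}^p|S|$ collapses. The later ``telescoping'' remark ($w(Q)\le w(S)$ suffices to sum $w(Q)$ over a tower) is also not correct on its own — that is precisely the Carleson failure the hypothesis $\langle w\rangle_Q^{1/p}<2^{2i_T(Q)}\langle w\rangle_T^{1/p}$ is there to fix, and you need to actually use it together with \eqref{e:Qsmall}. The intermediate algebra $\frac{\langle\sigma\rangle_Q}{\langle\sigma^{1/p'}\rangle_{\overline A,Q}^p}=\rho(Q)^{-p}\langle\sigma\rangle_Q$ is also off by a factor of $\langle\sigma\rangle_Q^{1-p}$, though you do abandon that branch.

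The paper's argument is shorter and does not pass through the $\mathcal S$-stopping cubes at all. First, the restriction in the sum gives $w(Q)\le 2^{2p\,i_T(Q)}\langle w\rangle_T|Q|$, and summing over each generation with \eqref{e:Qsmall} (the $10^{-pj}$ decay beats $2^{2pj}$) yields $\sum_{Q}w(Q)\lesssim w(T)$ directly, with the sum taken over all $Q$ with $Q^t=T$ in the restricted class. Second, the generalized H\"older inequality \eqref{e:hi} gives, for every $Q$,
\begin{equation*}
\langle\sigma\rangle_Q\langle w\rangle_Q\lesssim\langle\sigma^{1/p}\rangle_{A,Q}\bigl(\langle\sigma^{1/p'}\rangle_{\overline A,Q}\langle w\rangle_Q^{1/p}\bigr)\langle w\rangle_Q^{1/p'}\lesssim\alpha\,\langle\sigma^{1/p}\rangle_{A,Q}\langle w\rangle_Q^{1/p'},
\end{equation*}
using $\varepsilon_p\ge1$ and \eqref{e:alpha}. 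Multiplying by $|Q|=|Q|^{1/p}|Q|^{1/p'}$ and applying H\"older with exponents $p,p'$ to the sum over $Q$ then produces $\alpha\,\bigl[\sum_Q\langle\sigma^{1/p}\rangle_{A,Q}^p|Q|\bigr]^{1/p}\bigl[\sum_Q w(Q)\bigr]^{1/p'}$, and the first step finishes it. You had the right picture of what the hypothesis is doing, but the execution needs to stay at the $Q$ level and must keep the inequality between $\langle\sigma^{1/p}\rangle_{A,Q}$ and $\langle\sigma\rangle_Q^{1/p}$ pointing the correct way.
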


\begin{proof}
The restriction on $ w$ in the sum in \eqref{e:S1}, with the estimate \eqref{e:Qsmall}, immediately imply that 
\begin{equation*}
 \sum_{\substack{Q \::\: Q ^{t} = T \\ \langle w \rangle_Q  ^{1/p}< 2 ^{2 i _{T} (Q)} \langle w \rangle_T ^{1/p} }}
w (Q) \lesssim w (T).  
\end{equation*}
Use the inequality below, a consequence of the generalized H\"older's inequality \eqref{e:hi}, 
\begin{align}
\langle \sigma  \rangle_Q \langle w \rangle_Q 
&\lesssim  \langle\sigma^{1/p}  \rangle _{A, Q} 
\{  \langle\sigma^{1/p'}  \rangle _{ \overline {A}, Q} \langle w \rangle_Q ^{1/p} \} \langle w \rangle_Q ^{1/p'} 
\\&  \label{e:zs2w}
\lesssim \alpha  \langle\sigma^{1/p}  \rangle _{A, Q}  \langle w \rangle_Q ^{1/p'} . 
\end{align}
(We ignore the term $ \varepsilon _p (\rho (Q))$ here.)  
Thus, we need only bound the restricted sum over $ Q$ of the terms 
\begin{equation*}
 \langle\sigma^{1/p}  \rangle _{A, Q}   \lvert  Q\rvert ^{1/p} w (Q) ^{1/p'} . 
\end{equation*}
It is clear that H\"older's inequality  completes the proof.  
\end{proof}

In our second easy estimate, we impose the restriction that the average of $ \sigma $ has become rather small.  

\begin{lemma}\label{l:2} 
There holds, uniformly in $ T\in \mathcal T$, 
\begin{equation} \label{e:S2}
\sum_{S\in \mathcal S \::\: S ^{t}=T } \sum_{\substack{Q \::\: Q ^t =S\\ \langle \sigma  \rangle_Q  ^{1/p}< 2 ^{-i _{S} (Q)/2} \langle \sigma \rangle_S ^{1/p}}}
\langle \sigma  \rangle_Q \langle w \rangle_Q\cdot \lvert  Q\rvert 
\lesssim 
\alpha 
\Bigl[ 
\sum_{S\in \mathcal S \::\: S ^{t}=T }  
\langle\sigma^{1/p}  \rangle _{A, Q} ^{p} \lvert  Q\rvert 
\Bigr] ^{1/p} w (T) ^{1/p'} . 
\end{equation}
\end{lemma}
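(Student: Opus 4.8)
The plan is to run the argument of Lemma~\ref{l:1} with the roles of the two weights interchanged: the gain will now come from the restriction on $ \langle \sigma \rangle_Q$ together with the geometric decay \eqref{e:Qsmall}, but organised by the stopping parameter $ i _{S} (Q)$ rather than $ i _{T} (Q)$. First, exactly as in the derivation of \eqref{e:zs2w}, generalized H\"older gives for every cube $ Q$
\begin{equation*}
\langle \sigma \rangle_Q \langle w \rangle_Q \;\lesssim\; \langle \sigma ^{1/p} \rangle _{A, Q}\, \bigl\{ \langle \sigma ^{1/p'} \rangle _{\overline A, Q}\, \langle w \rangle_Q ^{1/p}\bigr\}\, \langle w \rangle_Q ^{1/p'} \;\lesssim\; \alpha\, \langle \sigma ^{1/p} \rangle _{A, Q}\, \langle w \rangle_Q ^{1/p'},
\end{equation*}
where we throw away the favourable factor $ \varepsilon _p (\rho (Q)) \ge 1$. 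Since $ A \in B_p$ forces $ A (t) \lesssim t ^{p}$, hence $ \langle \sigma ^{1/p} \rangle _{A, Q} \lesssim \langle \sigma \rangle_Q ^{1/p}$, the hypothesis $ \langle \sigma \rangle_Q ^{1/p} < 2 ^{-i _{S} (Q)/2} \langle \sigma \rangle_S ^{1/p}$ on the cubes in \eqref{e:S2} lets us extract a factor $ 2 ^{-i _{S} (Q)/2}$:
\begin{equation*}
\langle \sigma \rangle_Q \langle w \rangle_Q \, \lvert Q\rvert \;\lesssim\; \alpha\, 2 ^{-i _{S} (Q)/2}\, \langle \sigma \rangle_S ^{1/p}\, w (Q) ^{1/p'}\, \lvert Q\rvert ^{1/p}.
\end{equation*}

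Next, fix $ S$ and sum over the $ Q$ with $ Q ^{s}=S$ obeying the restriction, grouped by generation $ i _{S} (Q)=j$. Within one generation the cubes are pairwise disjoint subcubes of $ S$, so $ \sum w (Q) \le w (S)$, while \eqref{e:Qsmall} (applied with $ S$ in place of $ Q$) gives $ \sum \lvert Q\rvert \lesssim 10 ^{-pj}\lvert S\rvert$. H\"older's inequality then dominates the $ j$-th block by $ \alpha\, 2 ^{-j/2} 10 ^{-j}\sigma (S) ^{1/p} w (S) ^{1/p'}$, and summing the geometric series in $ j$ yields a per-corona estimate of the shape $ \alpha\, \sigma (S) ^{1/p} w (S) ^{1/p'}$; equivalently, carrying $ \langle \sigma ^{1/p} \rangle _{A, Q}$ along and invoking P\'erez' inequality \eqref{e:cmf} on $ S$, one may keep $ \bigl[\sum _{Q ^{s}=S} \langle \sigma ^{1/p} \rangle _{A, Q} ^{p}\lvert Q\rvert\bigr] ^{1/p}$ in place of $ \sigma (S) ^{1/p}$.

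It remains to sum the per-corona bound over the $ S \in \mathcal S$ with $ S ^{t}=T$ so as to produce the factor $ w (T) ^{1/p'}$, and I expect this to be the main obstacle. In Lemma~\ref{l:1} the restriction was placed directly on the $ w$-averages, so the $ w$-masses telescoped against $ \mathbf 1_{T}$ for free; here the restriction is on $ \sigma$ and is silent about $ w$, so one must instead use the geometry of the $ \mathcal S$-stopping tree inside $ T$: a corona $ S$ lying $ k$ generations below $ T$ with $ S ^{t}=T$ is reached only through $ \langle \sigma \rangle$-jumps, so $ \langle \sigma \rangle_S > 10 ^{k}\langle \sigma \rangle_T$ and the coronas of a fixed generation are disjoint subcubes of $ T$, whence $ \sum _{S \::\: \textup{gen } k}\lvert S\rvert \lesssim 10 ^{-k}\lvert T\rvert$. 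Feeding this, together with the two-weight $ A_p$ bound $ \langle w \rangle_S \lesssim \alpha ^{p}\langle \sigma \rangle_S ^{-(p-1)}$ that is built into \eqref{e:[[} (using $ \langle \sigma ^{1/p'} \rangle _{\overline A, S} \gtrsim \langle \sigma \rangle_S ^{1/p'}$), into the per-corona estimate should convert the Lebesgue decay $ 10 ^{-k}$ into a genuinely convergent series whose sum is controlled by $ \alpha\,\bigl[\sum _{Q \subseteq T} \langle \sigma ^{1/p} \rangle _{A, Q} ^{p}\lvert Q\rvert\bigr] ^{1/p} w (T) ^{1/p'}$, i.e.\ \eqref{e:S2}. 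The subtle point is making this last conversion actually land at or below the target rather than merely producing a quantity of comparable size; that is where the care goes, and it may require a sharper per-corona estimate than the crude $ \alpha\,\sigma (S) ^{1/p} w (S) ^{1/p'}$ above.
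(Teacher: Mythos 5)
Your first steps are sound --- the generalized H\"older bound producing a factor of $\alpha$, and the telescoping of $\langle\sigma\rangle_Q$ inside a fixed $\mathcal S$-corona thanks to the restriction $\langle\sigma\rangle_Q^{1/p}<2^{-i_S(Q)/2}\langle\sigma\rangle_S^{1/p}$ --- and you have correctly identified where the difficulty lies: summing the per-corona estimate $\alpha\,\sigma(S)^{1/p}w(S)^{1/p'}$ over $S$ with $S^t=T$ does not yield $w(T)^{1/p'}$, because the $\mathcal S$-stopping cubes are nested and carry no Carleson property in $w$; $\sum_{S:S^t=T}w(S)$ need not be comparable to $w(T)$. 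But the repair you sketch does not close this gap. Feeding $\sum_{\text{gen }k}|S|\lesssim 10^{-k}|T|$ and the crude $A_p$ consequence $\langle w\rangle_S\lesssim\alpha^p\langle\sigma\rangle_S^{-(p-1)}$ into $\sigma(S)^{1/p}w(S)^{1/p'}$ produces (after cancelling the $\sigma$-averages, and already only cleanly at $p=2$) a bound of order $\alpha^2|T|$, and to pass from this to $\alpha\,\sigma(T)^{1/p}w(T)^{1/p'}$ one would need $\alpha\lesssim\langle\sigma\rangle_T^{1/p}\langle w\rangle_T^{1/p'}$; but by \eqref{e:alpha} and $\langle\sigma^{1/p'}\rangle_{\overline A,T}\gtrsim\langle\sigma\rangle_T^{1/p'}$ the available inequality points the \emph{other} way. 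So the proposal as written is incomplete.

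The fix is not a sharper per-corona estimate but a change in when H\"older is applied: apply it \emph{once, globally over $T$}, before decomposing into coronas. Concretely, the paper first uses the telescoping to get the pointwise domination
\begin{equation*}
\sum_{\substack{Q:\,Q^s=S\\ \langle\sigma\rangle_Q^{1/p}<2^{-i_S(Q)/2}\langle\sigma\rangle_S^{1/p}}}\langle\sigma\rangle_Q\mathbf 1_Q\lesssim\langle\sigma\rangle_S\mathbf 1_S,
\end{equation*}
so the whole left side of \eqref{e:S2} is $\lesssim\int_T\sum_{S:S^t=T}\langle\sigma\rangle_S\mathbf 1_S\,dw$. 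Only now does one apply H\"older's inequality to this single integral, producing the factor $w(T)^{1/p'}$ together with $\bigl[\int_T\bigl\lvert\sum_S\langle\sigma\rangle_S\mathbf 1_S\bigr\rvert^p\,dw\bigr]^{1/p}$. Crucially, the $p$th power can then be pushed inside the sum over $S$, because along any chain of nested $S$ with $S^t=T$ the quantities $\langle\sigma\rangle_S$ grow at least geometrically (this is the $\mathcal S$-stopping rule); one lands at $w(T)^{1/p'}\bigl[\sum_S\langle\sigma\rangle_S^p\,w(S)\bigr]^{1/p}$, and the bound $\langle\sigma\rangle_S^p\,w(S)\lesssim\alpha^p\langle\sigma^{1/p}\rangle_{A,S}^p\,|S|$ (your first H\"older step, now applied at the stopping cubes $S$) finishes the proof. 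The ``push the $p$th power inside the sum'' step is exactly the mechanism the paper highlights in Remark~\ref{r:pass}, and it is what renders superfluous any $w$-Carleson control on the cubes $S$.
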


\begin{proof}
The restriction on $ \sigma $ in the sum in \eqref{e:S2}  immediately imply that for each $ S$
\begin{equation*}
\sum_{\substack{Q \::\: Q ^{s} = S\\ \langle \sigma  \rangle_Q  ^{1/p}< 2 ^{-i _{S} (Q)/2} \langle \sigma \rangle_S ^{1/p}}}
\langle \sigma  \rangle_Q   \mathbf 1_{Q}\lesssim \langle \sigma  \rangle_S \mathbf 1_{S}.   
\end{equation*}
Then, estimate 
\begin{align}
\textup{LHS} \eqref{e:S2} 
& =  \int _{T} 
\sum_{S\in \mathcal S \::\: S ^{t}=T } \sum_{\substack{Q \::\: Q ^{s} = S\\ \langle \sigma  \rangle_Q  ^{1/p}< 2 ^{-i _{S} (Q)/2} \langle \sigma \rangle_S ^{1/p}}}
\langle \sigma  \rangle_Q   \mathbf 1_{Q} \; d w 
\\
&\lesssim 
\int _{T}
\sum_{S\in \mathcal S \::\: S ^{t} = T}
\langle \sigma  \rangle_S    \mathbf 1_{S}\; d w 
\\ \label{e:pass}
& \lesssim w (T) ^{1/p'} 
\Bigl[ 
\int _{T} \Bigl\lvert 
\sum_{S\in \mathcal S \::\: S ^{t} = T}
\langle \sigma  \rangle_S    \mathbf 1_{S}
\Bigr\rvert ^p \; d w 
\Bigr] ^{1/p} 
\\
& \lesssim w (T) ^{1/p'}  
\Bigl[ 
\int _{T}  
\sum_{S\in \mathcal S \::\: S ^{t} = T}
\langle \sigma  \rangle_S  ^p   \mathbf 1_{S}
\; d w 
\Bigr] ^{1/p} 
\end{align}
This just depends upon the fact that for each $ x$, the sequence of numbers $ \langle \sigma  \rangle_S	 \mathbf 1_{S} (x)$ 
is growing at least geometrically, so that we can pass the $ p$th power inside the sum. 

Continuing, we note that as a consequence of \eqref{e:zs2w}, 
\begin{align*}
\langle \sigma  \rangle_S  ^p  w (S) &= \langle \sigma  \rangle_S  ^p  \langle w \rangle_S \cdot \lvert  S\rvert 
\lesssim \alpha ^p  \langle\sigma^{1/p}  \rangle _{A, S} ^p \lvert  S\rvert.   
\end{align*}
And this completes the proof.  (Again, the role of $ \varepsilon _p$ is ignored.)
\end{proof}

\begin{remark}\label{r:pass} The argument above, pushing the $ p$th power inside the sum in \eqref{e:pass} is 
key. It for instance encapsulates the proof of Theorem~\ref{t:cmf}: Combine \eqref{e:cmf} with Sawyer's two weight maximal function 
theorem \cite{MR676801}, using the argument \eqref{e:pass} to verify the testing condition.  
But, there is nothing quite so simple in the current setting---the sparse operators are just a bit bigger than the maximal function. 
Nevertheless, we want to appeal to an argument similar to \eqref{e:pass} below. To do so, we will absolutely need a certain 
integrability condition on the function $ \varepsilon _{p} (t)$. 
\end{remark}

The third, and decisive, sum is over cubes complementary to the previous two.  

\begin{lemma}\label{l:3} 
There holds, uniformly in $ T\in \mathcal T$, 
\begin{equation} \label{e:S3}
\sum_{S\in \mathcal S \::\: S ^{t}=T } \ 
\sideset {} {^3} \sum_{\substack{Q \::\: Q ^s = S}} 
\langle \sigma  \rangle_Q \langle w \rangle_Q\cdot \lvert  Q\rvert 
\lesssim 
\alpha 
\Bigl[ 
\sum_{Q \::\: Q ^{t} = T } 
\langle\sigma^{1/p}  \rangle _{A, Q}  \lvert  Q\rvert 
\Bigr] ^{1/p} w (T) ^{1/p} . 
\end{equation}
Above the superscript $  ^{3}$ on the second sum means that we sum over those $ Q $ such that 
\begin{equation}\label{e:3}
\langle w \rangle_Q ^{1/p} \ge 2 ^{2 i _{T} (Q)} \langle w \rangle_T  ^{1/p}, \quad \textup{and} \quad 
\langle \sigma  \rangle_Q ^{1/p} \ge  2 ^{-i _{S} (Q)/2} \langle \sigma \rangle_S ^{1/p} 
\end{equation}
\end{lemma}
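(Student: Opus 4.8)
The plan is to estimate the left side of \eqref{e:S3} by exploiting both restrictions in \eqref{e:3} simultaneously, together with the two-sided geometric decay of cube sizes from \eqref{e:Qsmall}. First I would recall the pointwise H\"older bound \eqref{e:zs2w}, namely $\langle\sigma\rangle_Q\langle w\rangle_Q \lesssim \alpha\, \langle\sigma^{1/p}\rangle_{A,Q}\,\langle w\rangle_Q^{1/p'}/\varepsilon_p(\rho(Q))$, but now — unlike in Lemmas~\ref{l:1} and~\ref{l:2} — we cannot afford to discard the factor $\varepsilon_p(\rho(Q))^{-1}$; it is precisely this gain that will absorb the loss coming from the fact that the $w$-averages need not be Carleson. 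So the target reduces to controlling
\begin{equation*}
\sum_{S \::\: S^t = T}\ \sideset{}{^3}\sum_{Q \::\: Q^s = S} \varepsilon_p(\rho(Q))^{-1}\,\langle\sigma^{1/p}\rangle_{A,Q}\,\lvert Q\rvert^{1/p}\, w(Q)^{1/p'} \lesssim \alpha \Bigl[\sum_{Q \::\: Q^t=T}\langle\sigma^{1/p}\rangle_{A,Q}^p\lvert Q\rvert\Bigr]^{1/p} w(T)^{1/p'},
\end{equation*}
after dividing out the common $\alpha$ (so really the displayed target is a weighted Carleson-type estimate with $\varepsilon_p^{-1}$ as a free gain).

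The next step is to organize the sum by the generation indices. On the $\mathcal Q$-child scale, write $i_T(Q) = i_S(Q) + i_T(S)$; the first restriction in \eqref{e:3} says $\langle w\rangle_Q$ is at least $2^{2pi_T(Q)}\langle w\rangle_T$, which forces $\rho(Q)$ to be \emph{large}: indeed, from the bound $\langle\sigma\rangle_Q\langle w\rangle_Q \lesssim \alpha^p \langle\sigma^{1/p}\rangle_{A,Q}^p/(\varepsilon_p(\rho(Q))^p\langle w\rangle_Q^{p/p'})$ wait — more directly, combining the pointwise inequality with the definition of $\alpha$ and the two restrictions forces a lower bound $\rho(Q) \gtrsim 2^{c\, i_T(Q)}$ for some $c = c(p) > 0$. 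This is the heart of the matter: the cubes contributing to the third sum are exactly those deep in their stopping tree, and being deep forces the Orlicz bump $\langle\sigma^{1/p'}\rangle_{\overline A,Q}/\langle\sigma\rangle_Q^{1/p'}$ to be exponentially large in the generation count. Consequently $\varepsilon_p(\rho(Q))^{-1} \lesssim \varepsilon_p(2^{c\,i_T(Q)})^{-1}$, and since $\varepsilon_p$ is increasing with $\int_1^\infty \varepsilon_p(t)^{-p'}\tfrac{dt}{t} = 1$, summing $\varepsilon_p(2^{ck})^{-p'}$ over $k \ge 0$ converges. I would then split the sum according to the value $k = i_T(Q)$, bound $\langle w\rangle_Q^{1/p'}$ crudely using the Carleson-type packing $\sum_{Q\subset S,\, i_S(Q)=j}\lvert Q\rvert \lesssim 10^{-pj}\lvert S\rvert$ from \eqref{e:Qsmall} (and iterating up the $\mathcal T$-tree, using $w(T_\sharp)\ge\tfrac12 w(T)$ and the stopping condition $\langle g\rangle$ — no, here using instead that $\mathcal S$-children decrease $\sigma$-averages geometrically and $\mathcal T$-children are $w$-Carleson in the appropriate sense), and finally apply H\"older in $Q$ to separate $\langle\sigma^{1/p}\rangle_{A,Q}$ (to the $p$th power, yielding the bracketed term) from everything else (to the $p'$th power).

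I expect the main obstacle to be the precise bookkeeping that converts the first inequality in \eqref{e:3} — a statement about $w$-averages — into the exponential lower bound on $\rho(Q)$, and then into a summable geometric series via the integrability hypothesis on $\varepsilon_p$. The delicacy is that $\rho(Q)$ depends only on $\sigma$, so the implication "$\langle w\rangle_Q$ large relative to $\langle w\rangle_T$" $\Rightarrow$ "$\rho(Q)$ large" must be routed through the hypothesis $\langle\sigma\rangle_Q\langle w\rangle_Q \simeq \alpha$-type equality \eqref{e:alpha} together with the pointwise H\"older bound: a large $w$-average forces a small $\sigma$-average relative to what the bump allows, i.e. a large $\rho(Q)$. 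One must also be careful that the second restriction in \eqref{e:3} (lower bound on $\langle\sigma\rangle_Q$) is used to keep the $w(Q)$-packing sum under control when summing over $S$ with $S^t = T$, so that the two restrictions genuinely cooperate rather than one being vacuous. Once the geometric-series gain $\sum_k \varepsilon_p(2^{ck})^{-p'} < \infty$ is in hand, the remaining steps — H\"older, the maximal-function packing $\sum_Q \langle\sigma^{1/p}\rangle_{A,Q}^p\lvert Q\rvert \lesssim \sigma(Q_0)$ from \eqref{e:cmf}, and summing $w(Q)$ over the tree — are routine, exactly as in Lemmas~\ref{l:1} and~\ref{l:2}.
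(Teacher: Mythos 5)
Your high-level plan is right, and it matches the paper's architecture: keep the factor $\varepsilon_p(\rho(Q))^{-1}$ from the H\"older bound \eqref{e:zs2w}, extract geometric behavior in the generation count $i_T(Q)$ from the two restrictions in \eqref{e:3}, and sum using the hypothesis $\int_1^\infty \varepsilon_p(t)^{-p'}\,\frac{dt}{t}<\infty$, finishing with H\"older and the packing $\sum_Q\langle\sigma^{1/p}\rangle_{A,Q}^p|Q|\lesssim\sigma(Q_0)$. But the one quantitative claim on which everything turns is reversed in sign, and this is a genuine gap.

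You assert that the restrictions in \eqref{e:3} force $\rho(Q)\gtrsim 2^{c\,i_T(Q)}$, i.e., that $\rho(Q)$ grows exponentially with depth, and you route the summability through $\sum_{k\ge0}\varepsilon_p(2^{ck})^{-p'}<\infty$. The algebra goes the other way. The pinning assumption \eqref{e:alpha} together with \eqref{e:rho} says $\alpha\simeq\varepsilon_p(\rho(Q))\,\rho(Q)\,\langle\sigma\rangle_Q^{1/p'}\langle w\rangle_Q^{1/p}$ for every $Q$. The first restriction in \eqref{e:3} makes $\langle w\rangle_Q^{1/p}$ at least $2^{2i_T(Q)}\langle w\rangle_T^{1/p}$, and the second restriction (combined with $i_S(Q)\le i_T(Q)$ and the construction of $\mathcal S$, which forces $\langle\sigma\rangle_S\ge\langle\sigma\rangle_T$) keeps $\langle\sigma\rangle_Q^{1/p'}$ from falling far below $\langle\sigma\rangle_T^{1/p'}$. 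Hence the product $\langle\sigma\rangle_Q^{1/p'}\langle w\rangle_Q^{1/p}$ is \emph{large}, at least $2^{i_T(Q)}\langle\sigma\rangle_T^{1/p'}\langle w\rangle_T^{1/p}$, and since the full product is pinned at $\alpha$, the remaining factor $\rho(Q)\,\varepsilon_p(\rho(Q))$ must be \emph{small}. Because $t\mapsto t\,\varepsilon_p(t)$ is increasing and $\varepsilon_p$ grows slower than a power, this yields \eqref{e:decrease}: $\rho(Q)\lesssim 2^{-i_T(Q)/2}\rho(T)$, a geometric \emph{decrease} of $\rho(Q)$ from $\rho(T)$ down toward its floor $\rho(Q)\ge1$. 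Your heuristic ``large $w$-average $\Rightarrow$ large $\rho(Q)$'' fails precisely because the second restriction pins $\langle\sigma\rangle_Q$ from below: $\rho(Q)$ is $\langle\sigma^{1/p'}\rangle_{\overline A,Q}$ (forced small by the large $w$) \emph{divided by} $\langle\sigma\rangle_Q^{1/p'}$ (not small), so $\rho(Q)$ is small.

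Because the direction is reversed, the mechanism you describe for \eqref{e:sum} does not apply. Since $\rho(Q)$ decreases along a chain, $\varepsilon_p(\rho(Q))^{-p'}$ \emph{increases}; the boundedness in \eqref{e:sum} instead comes from the fact that, by \eqref{e:decrease}, the values $\rho(Q)$ along a chain pass each dyadic scale $[2^j,2^{j+1})$ at most a bounded number of times while descending from $\rho(T)$ to $1$, so the total is controlled by $\sum_{j\ge0}\varepsilon_p(2^j)^{-p'}\lesssim\int_1^\infty\varepsilon_p(t)^{-p'}\frac{dt}{t}$, with the convergent tail aligned with the \emph{large} end of the range of $\rho(Q)$, not the deep end of the tree. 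Separately, your aside that ``$\mathcal T$-children are $w$-Carleson in the appropriate sense'' is off: the absence of $w$-Carleson packing in this regime is exactly what Lemma~\ref{l:3} must circumvent (the $w$-Carleson regime having been dispatched in Lemma~\ref{l:1} by the first restriction's complement), and it is the $\varepsilon_p$ gain via \eqref{e:sum}, inserted as $\varepsilon_p(\rho(Q))^{-1+1}$ before H\"older, that substitutes for Carleson packing.
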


\begin{proof}
The role of $ \varepsilon _p$ is now key.  
The main point of the assumptions on the sum in this case is that the ratios $ \rho (Q)$ \emph{decrease at least geometrically, holding the stopping parents fixed:} 
\begin{equation}\label{e:decrease}
\rho (Q) \lesssim 2 ^{- i _{T} (Q)/2 } \rho (T), \qquad Q\in \mathcal Q ,\  Q ^s=S,\ Q ^{t}=T.   
\end{equation}
Indeed, exploiting the definitions \eqref{e:rho}, \eqref{e:alpha},  and the conditions in \eqref{e:3},  
\begin{align*}
\varepsilon_p (\rho (Q)) ^{-1} \alpha  & \simeq   \langle \sigma ^{1/p'} \rangle _{\overline A, Q}  \langle w \rangle_Q ^{1/p}  
 & \eqref{e:alpha} 
\\&
\simeq
\rho (Q) \langle \sigma  \rangle _{Q} ^{1/p'} \langle w \rangle_Q ^{1/p}  & \eqref{e:rho}
\\
& \gtrsim 2 ^{- i _{S} (Q)/2+ 2 i_T (Q)}  \rho (Q) \langle \sigma  \rangle_S ^{1/p'}\langle w \rangle_T ^{1/p}  & \eqref{e:3} 
\\
& \gtrsim   2 ^{ i_T (Q)} \rho (Q)  \langle \sigma  \rangle_S ^{1/p'} \langle w \rangle_T  ^{1/p} 
& ( i _S (Q) \le i_T (Q)) 
\\
& \gtrsim   2 ^{ i_T (Q)} \rho (Q)  \langle \sigma  \rangle_T ^{1/p'} \langle w \rangle_T  ^{1/p} 
 & \textup{(construction of $ \mathcal S$)} 
\\
& \simeq 2 ^{i_T (Q)} \frac {\rho (Q) } { \rho (T)}  
\langle \sigma  \rangle_{\overline  A , T} ^{1/p'} \langle w \rangle_T  ^{1/p} & \eqref{e:rho}
\\
&\simeq \alpha  2 ^{i_T (Q)} \frac {\rho (Q) } { \rho (T)}   \varepsilon_p (\rho (T)) ^{-1}.  & \eqref{e:alpha} 
\end{align*}
Summarizing, there holds 
\begin{equation*}
2 ^{- i_T (Q)} \gtrsim \frac {\rho (Q) } { \rho (T)}    \frac {\varepsilon_p (\rho (Q)) } { \varepsilon_p (\rho (T)) } 
\gtrsim \Bigl[\frac {\rho (Q) } { \rho (T)}   \Bigr] ^2 ,  
\end{equation*}
since we assume that $ \varepsilon _p (t)$ grows more slowly than a power of log at infinity.  This proves \eqref{e:decrease}. 

The integrability condition on the function $ \varepsilon_p $ is used this way. 
By the monotonicity of $ \varepsilon _p (t)$,  the  condition $ \int ^{\infty }_1 \varepsilon (t) ^{-p'} \frac {dt} t =1 $, and the geometric decay \eqref{e:decrease}, we see that 
\begin{equation} \label{e:sum}
\sum_{S\in \mathcal S \::\: S ^{t}=T } \ \sideset {} {^{3}}\sum_{Q \::\: Q ^s=S} \varepsilon (\rho (Q)) ^{-p'} \mathbf 1_{Q} (x) \lesssim \mathbf 1_{T} (x). 
\end{equation}

We will rely upon this estimate below, which holds for all $ Q\in \mathcal Q$. 
\begin{align}
\langle \sigma  \rangle_Q ^{p} \langle w \rangle_Q 
&\lesssim \langle\sigma ^{1/p}   \rangle _{A, Q} ^{p} \langle  \sigma ^{1/p'}   \rangle _{\overline { A}, Q}  ^{p}  \langle w \rangle_Q
\\& \simeq  \varepsilon_p (\rho (Q)) ^{-p}  \langle\sigma^{1/p}   \rangle _{A, Q} ^{p}  \{ \varepsilon_p (\rho (Q)) ^{p} \langle\sigma^{1/p'}   \rangle _{\overline  A, Q} ^{p}   \langle w \rangle_Q \} 
\\  \label{e:sw<}
& \simeq  
\alpha ^{p}  \varepsilon (\rho (Q)) ^{-p} \langle\sigma^{1/p}   \rangle _{A, Q}  ^{p} . 
\end{align}

Now, the proof can be completed.  We can estimate as below, inserting $ \varepsilon (\rho (Q))$ to the zero power, and applying H\"older's inequality. 
\begin{align*} 
\textup{LHS} \eqref{e:S3} 
& = \int _{T} 
\sum_{S\in \mathcal S \::\: S ^{t}=T } \ 
\sideset {} {^3} \sum_{\substack{Q \::\: Q ^{\ast} = (S,T)}}  \varepsilon (\rho (Q)) ^ {-1+1}
\langle \sigma  \rangle_Q \mathbf 1_{Q} \; d w 
\\
& \lesssim  
w (T) ^{1/p'} 
\Bigl[
\int _{T}  
\sum_{Q \::\: Q ^{t}=T}  \varepsilon (\rho (Q)) ^{p}
\langle \sigma  \rangle_Q  ^{p} \mathbf 1_{Q} 
 \; dw 
\Bigr] ^{1/p}   & \textup{by  \eqref{e:sum}}
\\ 
& = 
w (T) ^{1/p'} 
\Bigl[ 
\sum_{Q \::\: Q ^{t}=T}  \varepsilon (\rho (Q)) ^{p} \langle \sigma  \rangle_Q  ^{p} \langle w \rangle_{Q} \lvert  Q\rvert   
\Bigr] ^{1/p} 
\\
& \lesssim  \alpha 
w (T) ^{1/p'} 
\Bigl[ 
\sum_{Q \::\: Q ^{t}=T}   \langle \sigma ^{1/p} \rangle_{A, Q}  ^{p} \lvert  Q\rvert   
\Bigr] ^{1/p}     & \textup{by \eqref{e:sw<}.}
\end{align*}
This completes the proof. 
\end{proof}

\section{Log and LogLog Bumps} 

We illustrate the implications of our main theorem for certain `borderline' bumps.  
By a \emph{log bump}, we mean function in  $ B_p$  of the form 
\begin{equation*}
L _{p', \eta }(t) \simeq t ^{p} (\operatorname{Log}  t ^{p'}) ^{-1-  (p-1)\eta }, \qquad \eta >0, \ t \to \infty . 
\end{equation*}
In this case, it follows from \eqref{e:overline} that  the dual Young function is $ \overline {L _{p',\eta }} (t) \simeq t ^{p'} (\operatorname{Log}  t ) ^{\frac 1 {p-1}+ \eta  }$.  
Throughout this section, $\operatorname{Log}  t  := 1+ \max \{0, \log t\} $. 

By a \emph{log-log bump}, we mean functions  in $  B_p$  of the form 
\begin{gather*}
\Lambda _{p',\eta } (t) \simeq t ^{p}  (\operatorname{Log}  t) ^{-1} (\operatorname{Log}  t ) ^{-1-  (p-1)\eta }, \qquad \eta >0, \ t \to \infty . 
\\
\overline {\Lambda _{p',\eta }} (t) \simeq t ^{p'} (\operatorname{Log}  t)  ^{\frac 1 {p-1}} (\operatorname{Log}  \operatorname{Log}  t) ^{\frac 1{p-1}+ \eta  }
\end{gather*}

We show that all the log bumps fall under the scope of our main theorem, and that log-log bumps do as well, provided the power on the bump 
is sufficiently large.   

\begin{proposition}\label{p:eta}  These two assertions hold:\\ 
\begin{enumerate}
\item Suppose that $ \eta >0$, and that $ \sigma $ and $ w$ are a pair of weights such that 
 $ [\sigma , w] _{\overline {L _{p',\eta }} ,p' }< \infty $, as  defined in \eqref{e:[]}.  Then, for $ \varepsilon (t) = t ^{\eta /2 p'}$, 
we have $ \lceil\sigma , w\rceil_{\overline {L _{p',\eta /2}}, \varepsilon _ p, p' } < \infty $, where the latter expression is defined in \eqref{e:[[}.

\item Suppose that $ \eta > 0 $, and that $ \sigma $ and $ w$ are a pair of weights such that 
 $ [\sigma , w] _{\overline {\Lambda  _{p', 1+\eta }},p' }< \infty $.   Then,  
set  $ \varepsilon_p (t)=  (\operatorname{Log} t) ^{ 1/p' + \eta '/p'} $, 
we have $ \lceil\sigma , w\rceil_{\overline {\Lambda_{p',\eta'/p}}, \varepsilon _ p, p'} < \infty $. 

\end{enumerate}
Moreover, in both cases, 
\begin{equation} \label{e:finite}
\int ^{\infty } \varepsilon_p (t) ^{-p'} \frac {dt} t < \infty. 
\end{equation}
\end{proposition}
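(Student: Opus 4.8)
\textbf{Proof proposal for Proposition~\ref{p:eta}.}

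The plan is to verify, in each of the two cases, that the supremum defining $\lceil\sigma,w\rceil_{\overline A,\varepsilon_p,p'}$ in \eqref{e:[[} is finite, by comparing the new bump $\overline A$ (a weakened version of the hypothesis bump) against the original one and absorbing the loss into the factor $\varepsilon_p(1+\rho(Q))$. The crucial structural observation is that, for the slowly varying bumps we work with, the Luxembourg average $\langle\sigma^{1/p'}\rangle_{\overline A,Q}$ depends on the bump $\overline A$ in a quantitatively controlled way: if $\overline A_1(t)\simeq t^{p'}\beta_1(t)$ and $\overline A_2(t)\simeq t^{p'}\beta_2(t)$ with $\beta_1\le\beta_2$, then $\langle f\rangle_{\overline A_1,Q}\le\langle f\rangle_{\overline A_2,Q}$, and moreover the \emph{ratio} $\langle f\rangle_{\overline A_2,Q}/\langle f\rangle_{\overline A_1,Q}$ can be bounded in terms of $\beta_2/\beta_1$ evaluated at the relevant ``size'' of $f$ on $Q$, which for $f=\sigma^{1/p'}$ is comparable to $\rho(Q)$ after normalizing by $\langle\sigma\rangle_Q^{1/p'}$. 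So the first step in each case is to record this comparison lemma relating two nearby Orlicz averages through a power (case 1) or a log power (case 2) of $\rho(Q)$.

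\textit{Case (1).} Here $\overline{L_{p',\eta}}(t)\simeq t^{p'}(\operatorname{Log}t)^{\frac1{p-1}+\eta}$ and $\overline{L_{p',\eta/2}}(t)\simeq t^{p'}(\operatorname{Log}t)^{\frac1{p-1}+\eta/2}$, so the ratio of the slowly varying parts is $(\operatorname{Log}t)^{\eta/2}$. I would first show that
\begin{equation*}
\langle\sigma^{1/p'}\rangle_{\overline{L_{p',\eta}},Q}\ \gtrsim\ \langle\sigma^{1/p'}\rangle_{\overline{L_{p',\eta/2}},Q}\cdot\bigl(\operatorname{Log}\rho(Q)\bigr)^{\eta/2(p-1)}\ \gtrsim\ \langle\sigma^{1/p'}\rangle_{\overline{L_{p',\eta/2}},Q}\cdot\rho(Q)^{\eta/2p'}\Big/(\text{lower order}),
\end{equation*}
where the second inequality is a brutal use of $\operatorname{Log}t\le C_\delta t^\delta$; more precisely, since $\varepsilon(t)=t^{\eta/2p'}$ and one only needs $\varepsilon(\rho(Q))\langle\sigma^{1/p'}\rangle_{\overline{L_{p',\eta/2}},Q}\lesssim\langle\sigma^{1/p'}\rangle_{\overline{L_{p',\eta}},Q}$, I will instead bound $\varepsilon(1+\rho(Q))$ directly by a power of $\operatorname{Log}\rho(Q)$ times a constant — the point is that $\rho(Q)$ cannot be genuinely large without $\overline{L_{p',\eta}}$ already ``seeing'' the extra $(\operatorname{Log})^{\eta/2(p-1)}$, because $\rho(Q)\gtrsim1$ always and when $\rho(Q)\simeq1$ the factor $\varepsilon(1+\rho(Q))$ is itself $\simeq1$. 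Multiplying by $\langle w\rangle_Q^{1/p}$ and taking the sup then gives $\lceil\sigma,w\rceil_{\overline{L_{p',\eta/2}},\varepsilon,p'}\lesssim[\sigma,w]_{\overline{L_{p',\eta}},p'}<\infty$.

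\textit{Case (2).} The mechanism is identical but one derivative of ``slowly varying'' weaker: $\overline{\Lambda_{p',1+\eta}}(t)\simeq t^{p'}(\operatorname{Log}t)^{\frac1{p-1}}(\operatorname{Log}\operatorname{Log}t)^{\frac1{p-1}+\eta}$ versus $\overline{\Lambda_{p',\eta'/p}}(t)\simeq t^{p'}(\operatorname{Log}t)^{\frac1{p-1}}(\operatorname{Log}\operatorname{Log}t)^{\frac1{p-1}+\eta'/p}$, so the gain available in the hypothesis bump, relative to the weaker one, is a factor $(\operatorname{Log}\operatorname{Log}t)^{\eta-\eta'/p}$, which one arranges to dominate $\varepsilon_p(t)=(\operatorname{Log}t)^{1/p'+\eta'/p'}$ — wait, this requires care: $\varepsilon_p$ here is a \emph{log} power, not a loglog power, so the argument cannot be ``the hypothesis bump already sees $\varepsilon_p$'' in the same crude way. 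The right reading is that the comparison of the two $\Lambda$-averages costs only a $\operatorname{Log}\operatorname{Log}\rho(Q)$ factor, which is harmless, while $\varepsilon_p(1+\rho(Q))=(\operatorname{Log}(1+\rho(Q)))^{1/p'+\eta'/p'}$ must be controlled by the \emph{extra} $(\operatorname{Log})^{\frac1{p-1}}$ already present in $\overline{\Lambda_{p',\eta'/p}}$ compared to a pure $t^{p'}$ — i.e. one should compare $\overline{\Lambda_{p',\eta'/p}}$-averages of $\sigma^{1/p'}$ against $\langle\sigma\rangle_Q^{1/p'}$ and observe $\rho(Q)\gtrsim(\operatorname{Log}\rho(Q))^{\text{something}}$ is vacuous, so actually the needed inequality $\varepsilon_p(1+\rho(Q))\langle\sigma^{1/p'}\rangle_{\overline{\Lambda_{p',\eta'/p}},Q}\lesssim\langle\sigma^{1/p'}\rangle_{\overline{\Lambda_{p',1+\eta}},Q}$ follows from a single Orlicz comparison estimate: for these bumps, $\langle f\rangle_{\overline{\Lambda_{p',1+\eta}},Q}\gtrsim(\operatorname{Log}(e+\langle f\rangle_{\overline{\Lambda_{p',\eta'/p}},Q}/\langle |f|^{p'}\rangle_Q^{1/p'}))^{\eta-\eta'/p}\langle f\rangle_{\overline{\Lambda_{p',\eta'/p}},Q}$ — no, the powers don't match a log. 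I will therefore choose the cleanest route: establish once and for all the elementary Orlicz fact that replacing a Young function $\overline A(t)=t^{p'}\beta(t)$ by $\overline A_\ast(t)=t^{p'}\beta(t)\gamma(t)^{p'}$ with $\gamma$ increasing multiplies the average of a nonnegative $f$ by a factor comparable to $\gamma$ evaluated at $\langle f\rangle_{\overline A,Q}/\langle|f|^{p'}\rangle_Q^{1/p'}=\rho_{\overline A}(Q)$; apply it in case (1) with $\gamma(t)=(\operatorname{Log}t)^{\eta/2(p-1)}$ so that $\gamma(\rho(Q))\gtrsim$ nothing useful unless — and here is the genuine content — one uses that in case~(1) the hypothesis is stated for $\eta$ and the conclusion for $\eta/2$, giving a full $(\operatorname{Log})^{\eta/2(p-1)}$ to spend, while $\varepsilon(t)=t^{\eta/2p'}$ forces a \emph{power} comparison, which is only legitimate because $\langle\sigma^{1/p'}\rangle_{\overline A,Q}\lesssim\langle\sigma\rangle_Q^{1/p'}\cdot(\text{any power of }\rho)$ trivially bounds the small-$\rho$ regime and the finiteness of $[\sigma,w]_{\overline{L_{p',\eta}},p'}$ already forces $\rho(Q)$ to be at most a power of $|Q|/\sigma(Q)$-type quantities that the loglog argument handles.

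\textit{The integrability claim \eqref{e:finite}.} This is the routine endpoint check and I would dispatch it last: in case (1), $\varepsilon_p(t)^{-p'}=t^{-\eta/2}$, so $\int^\infty t^{-\eta/2}\,dt/t=\int^\infty t^{-1-\eta/2}\,dt<\infty$; in case (2), $\varepsilon_p(t)^{-p'}=(\operatorname{Log}t)^{-1-\eta'}$, so $\int^\infty(\operatorname{Log}t)^{-1-\eta'}\,dt/t=\int^\infty u^{-1-\eta'}\,du<\infty$ after substituting $u=\operatorname{Log}t$. (One may additionally need to normalize $\varepsilon_p$ by a constant so that the integral equals $1$ exactly, as in the statement of Theorem~\ref{t:main}; this rescaling is harmless for all the estimates above.)

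\textbf{Main obstacle.} The one real point of care — and the step I expect to occupy most of the written proof — is the Orlicz comparison lemma: quantifying how much larger $\langle\sigma^{1/p'}\rangle_{\overline A_1,Q}$ is than $\langle\sigma^{1/p'}\rangle_{\overline A_2,Q}$ when $\overline A_1\ge\overline A_2$, \emph{in terms of $\rho(Q)$ rather than of the unbounded ratio $\langle\sigma^{1/p'}\rangle_{\overline A,Q}$ itself}. The correct formulation is that if $\overline A_1(t)=\overline A_2(t)\gamma(t)^{p'}$ with $\gamma$ increasing and slowly varying, then $\langle f\rangle_{\overline A_1,Q}\gtrsim\gamma\bigl(c\,\langle f\rangle_{\overline A_2,Q}/\langle|f|^{p'}\rangle_Q^{1/p'}\bigr)\,\langle f\rangle_{\overline A_2,Q}$; proving this amounts to testing the Luxembourg infimum for $\overline A_1$ at $\lambda=\gamma(\rho)\cdot\langle f\rangle_{\overline A_2,Q}$ and checking $\int_Q\overline A_1(|f|/\lambda)\,dx/|Q|\le1$ using convexity and the slow variation of $\gamma$ to pull $\gamma(|f|/\langle f\rangle_{\overline A_2,Q})\approx\gamma(\rho)$ out of the integral on the bulk where $|f|\lesssim\rho\cdot\langle f\rangle_{\overline A_2,Q}$-scale. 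Everything else is bookkeeping with Log's and the explicit $\varepsilon_p$.
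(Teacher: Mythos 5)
Your plan has the right shape at the top level: reduce the claim to a pointwise comparison between the Orlicz averages for the stronger and weaker bump, quantified by the ratio $\rho(Q)$ from \eqref{e:rho}, and then check the integral condition \eqref{e:finite}. The integrability check at the end is correct. But the central ``Orlicz comparison lemma'' — which you yourself flag as the main content — is not correctly set up, and the confusion propagates into your case-(2) paragraph.

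Two concrete problems. First, the proof strategy you sketch for the comparison is directionally backwards. You want a lower bound $\langle f\rangle_{\overline A_1,Q}\gtrsim\gamma(\rho)\,\langle f\rangle_{\overline A_2,Q}$ (the stronger bump beats the weaker one by a factor $\gamma(\rho)$), but ``testing the Luxembourg infimum for $\overline A_1$ at $\lambda=\gamma(\rho)\langle f\rangle_{\overline A_2,Q}$ and checking $\tfrac{1}{|Q|}\int_Q\overline A_1(|f|/\lambda)\le1$'' is exactly how one proves the \emph{upper} bound $\langle f\rangle_{\overline A_1,Q}\le\lambda$. The paper instead proves an upper bound on the weaker average, Theorem~\ref{t:wc}: $\langle\sigma\rangle_{B_0,Q}\lesssim\langle\sigma\rangle_{B,Q}\,\theta\!\bigl(\langle\sigma\rangle_{B,Q}/\langle\sigma\rangle_Q\bigr)$, and the mechanism is not ``pull a slowly varying factor out on the bulk'' but a Jensen inequality for weakly concave $t\mapsto t\theta(t)$, made usable by the distribution-function reformulation of the Orlicz average (Lemma~\ref{l:B}). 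Nothing in your sketch supplies this concavity step, and without it the pointwise substitution $\gamma(|f|/\langle f\rangle_{\overline A_2,Q})\approx\gamma(\rho)$ does not control the tail of $f$ where the Orlicz integral actually lives.

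Second, you are comparing the wrong quantities, which is precisely what produces the spiral in your case-(2) discussion (``no, the powers don't match a log''). The ratio of the slowly varying parts of the two bumps is a power of $\operatorname{Log}t$ in $t$, and you then try to dominate $\varepsilon_p(t)$ --- a genuine power in $t$ --- by it; naturally this fails. But the gain that the self-improvement theorem delivers is a function of the \emph{ratio} $\langle\sigma\rangle_{B,Q}/\langle\sigma\rangle_Q\simeq\rho_B^{p'}$, not of the abstract argument $t$: in case (1), $\theta^{-1}$ applied there is a power $\rho_B^{p'\eta/2}$, while $\varepsilon_p(\rho_0)^{p'}=\rho_0^{\eta/2}$, and since the weaker-bump ratio $\rho_0$ is at most $\rho_B$ (hence at most $\rho_B^{p'}$) the needed domination $\varepsilon_p(\rho_0)^{p'}\lesssim\theta(\rho_B^{p'})^{-1}$ holds. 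The ``power vs.\ log-power mismatch'' you try to talk your way around simply is not there once the correct variable is used, and case (2) has exactly the same resolution one level down the iterated logarithm. As written, the proposal never reaches this observation — which is the entire point of Theorem~\ref{t:wc} — so it does not yield a proof of either part of Proposition~\ref{p:eta}.
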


This relies upon the observations in \cites{13062653,MR3127380} concerning self-improvement of Orlicz norms, and we will specialize their more general considerations.  
For the bump functions that we are interested in, write 
$
\overline  A (t) = t ^{p'} \beta (t ^{p'}), 
$
so that  for $ u$ sufficiently large, 
\begin{equation}
\beta (u)  \simeq 
\begin{cases}
(\operatorname{Log}  u ) ^{\frac 1{p-1}+ \eta }  & A = L _{p', \eta }
\\
(\operatorname{Log}  u) ^{\frac 1{p-1}} (\operatorname{Log}  \operatorname{Log}  u ) ^{\frac 1{p-1}+ \eta }  & A = \Lambda _{p',\eta }
\end{cases}
\end{equation}
Also, for a cube $ Q$, set 
\begin{equation*}
D _{Q} (\lambda ) := \frac 1 {\lvert  Q\rvert } \lvert  \{ x\in Q \::\: \sigma (x) > \lambda \}\rvert. 
\end{equation*}

Set $ B (t) = t \beta (t)$.  We have 
\begin{lemma}\label{l:B}  With the notations just established, there holds 
\begin{equation*}
\langle \sigma \rangle_{B, Q} \simeq \int _{0} ^{\infty } D _Q (\lambda ) \beta (D _Q (\lambda ) ^{-1}  ) \, d \lambda . 
\end{equation*}
\end{lemma}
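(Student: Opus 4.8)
The plan is to reduce the Luxembourg norm of $\sigma$ with respect to the Young function $B(t) = t\beta(t)$ to a layer-cake computation in terms of the distribution function $D_Q(\lambda)$. First I would recall that for a Young function $B$ the Luxembourg norm $\langle \sigma \rangle_{B,Q} = \inf\{\lambda > 0 : \frac{1}{|Q|}\int_Q B(\sigma(x)/\lambda)\,dx \le 1\}$, and that since $B$ is (up to the usual normalization) comparable to a doubling Young function — because $\beta$ is slowly varying — the infimum is achieved, i.e.\ at the optimal $\lambda$ we have $\frac{1}{|Q|}\int_Q B(\sigma/\lambda)\,dx \simeq 1$. The slow variation of $\beta$ (hence of $B(t)/t = \beta(t)$) is what makes the scaling $\lambda \mapsto B(t/\lambda)$ behave well and lets us replace ``$=1$'' by ``$\simeq 1$'' without losing the comparison; this is precisely the ``self-improvement'' type remark attributed to \cites{13062653,MR3127380}.

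Next I would rewrite $\frac{1}{|Q|}\int_Q B(\sigma(x))\,dx$ (temporarily dropping $\lambda$, or absorbing it by homogeneity after the fact) using the layer-cake / integration-by-parts identity
\begin{equation*}
\frac{1}{|Q|}\int_Q B(\sigma(x))\,dx = \int_0^\infty B'(\lambda)\, D_Q(\lambda)\,d\lambda,
\end{equation*}
valid since $B$ is increasing with $B(0)=0$ and $D_Q$ is the normalized distribution function of $\sigma$ on $Q$. Then, because $B(t) = t\beta(t)$ with $\beta$ slowly varying, one has $B'(\lambda) \simeq \beta(\lambda)$ for $\lambda$ large (the derivative of the slowly varying factor contributes a lower-order term), so the integral is comparable to $\int_0^\infty \beta(\lambda)\,D_Q(\lambda)\,d\lambda$. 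The final step is a change of variables: substituting $\lambda \rightsquigarrow$ the value where $D_Q(\lambda) \simeq 1/\lambda$-type threshold, or more cleanly, observing that the ``mass'' of the integral $\int_0^\infty \beta(\lambda) D_Q(\lambda)\,d\lambda$ concentrates near $\lambda$ of order $D_Q(\lambda)^{-1}$, one recognizes this as $\int_0^\infty D_Q(\lambda)\,\beta(D_Q(\lambda)^{-1})\,d\lambda$ after unwinding the $\lambda$ appearing through the optimal Luxembourg scaling. Concretely, set $\lambda = \mu\,\langle\sigma\rangle_{B,Q}$ in $\frac{1}{|Q|}\int_Q B(\sigma/\langle\sigma\rangle_{B,Q})\,dx \simeq 1$, expand via the layer cake, and match powers using $B(ab) \simeq a B(b)\,\frac{\beta(ab)}{\beta(b)} \simeq aB(b)$ for fixed $a$ and large $b$ — slow variation again.

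I expect the main obstacle to be making the two uses of slow variation of $\beta$ fully rigorous and uniform: (i) that $B'(\lambda) \simeq B(\lambda)/\lambda = \beta(\lambda)$ up to multiplicative constants independent of $Q$, and (ii) that the ``almost homogeneity'' $B(t/\lambda) \simeq B(t)/\lambda \cdot [\beta(t/\lambda)/\beta(t)]$ can be combined with the constraint $\frac{1}{|Q|}\int_Q B(\sigma/\lambda)\,dx \simeq 1$ to pin down $\lambda$ as $\simeq \int_0^\infty D_Q(\sigma > \lambda)\,\beta(D_Q^{-1})\,d\lambda$ — the subtlety being that one is inverting a relation between $\lambda$ and an integral that itself depends on $\lambda$ only through the fixed measure $\sigma|_Q$, so really there is no circularity once the layer-cake form is in hand, but the constants must be tracked so that they depend only on $p$ and $\eta$ and not on $Q$ or $\sigma$. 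All the steps after that are routine computations with the distribution function, and since the statement is only an ``$\simeq$'', the slowly-varying error terms are harmless.
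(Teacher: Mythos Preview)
Your reduction to the layer--cake integral $\int_0^\infty \beta(\lambda)\,D_Q(\lambda)\,d\lambda$ is correct and matches the paper's first step. The gap is in what comes next: you need to compare $\int_0^\infty \beta(\lambda)\,D_Q(\lambda)\,d\lambda$ with $\int_0^\infty D_Q(\lambda)\,\beta(D_Q(\lambda)^{-1})\,d\lambda$, and none of the mechanisms you propose actually do this. There is no change of variables that converts $\beta(\lambda)$ into $\beta(D_Q(\lambda)^{-1})$; the ``mass concentration'' heuristic is not an inequality; and the almost--homogeneity identity $B(ab)\simeq aB(b)$ only controls how $B$ scales, not how $\beta(\lambda)$ relates to $\beta$ evaluated at the reciprocal of a distribution function of $\sigma$.

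The missing idea is a pointwise comparison of the two integrands, and it requires the normalization $\langle\sigma\rangle_{B,Q}=1$ (the ``both sides scale as a norm'' step). Under that normalization, Chebyshev gives $D_Q(t)\le t^{-1}$, so monotonicity of $\beta$ yields $\beta(t)\le\beta(D_Q(t)^{-1})$ immediately; this is one inequality. For the reverse, one must argue that the set of $t$ where $\beta(D_Q(t)^{-1})$ is \emph{much} larger than $\beta(t)$ contributes negligibly: because $\beta$ is slowly varying, $\beta(D_Q(t)^{-1})>C\beta(t)$ forces $D_Q(t)\le t^{-2}$, and on that set $D_Q(t)\,\beta(D_Q(t)^{-1})$ is integrable outright. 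Your proposal never invokes Chebyshev and never isolates the exceptional set, so as written the argument does not close.
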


\begin{proof}
Both sides scale as a norm. So, assuming that $ \lVert \sigma \rVert_{B, Q} =1$, 
we have 
\begin{align*}
1 = \frac 1 {\lvert  Q\rvert }\int _{Q}  A (\sigma (x))\; dx & = 
\int _{0} ^{\infty } A (t ) d D_Q (t ) 
\\
& \simeq \int _{0} ^{\infty } \beta (t )  \cdot D_Q (t) \; dt 
\end{align*}
by integration by parts.  Now, by Chebyshceff, $ D_Q (t) \le t ^{-1} $, hence 
\begin{equation*}
1 \lesssim \int _{0} ^{\infty } \beta ( D_Q (t)  ^{-1} )  \cdot D_Q (t) \; dt . 
\end{equation*}

For the reverse inequality, we need to see that $ \int ^{\infty } \beta ( D_Q (t)  ^{-1} )  \cdot D_Q (t) \; dt \lesssim 1 $. 
And, so we need only be concerned with those $ t$ for which 
$ \beta ( D_Q (t)  ^{-1} ) > C \beta (t)$.  But, for appropriate $ C$, this would imply that $ D_Q (t)  \le \min\{ 1,  t ^{-2}\} $, so the proof is finished.  
\end{proof}

\begin{definition}\label{d:wc}\cite{13062653}*{Def 2}
A function f is \emph{weakly concave} on an interval $ I$, if there is a constant $ C\ge 1 $ so that  for any numbers $ x_1 ,\dotsc, x_n\in I$ in its domain, and $ \lambda _1 ,\dotsc, \lambda _n\ge 0$,  with $ \sum_{j=1} ^{n} \lambda _j =1$, there holds 
\begin{equation*}
f \Bigl(  \sum_{j=1} ^{n} \lambda _j x_n \Bigr) \ge C  \sum_{j=1} ^{n} \lambda _j f(x_n). 
\end{equation*}
\end{definition}

\begin{priorResults}\label{t:wc}\cite{13062653}*{Thm 5.3}  Write $ B_0 (t) = t \beta _0 (t)$, and assume 
$ B_0 (t) \lesssim  B (t) \theta ( \beta  (t))$, where $ \theta (t) $ is decreasing with 
$t \to  t \theta (t)$  weakly concave on $ (0, \infty )$.  Then, 
\begin{equation*}
\langle \sigma \rangle_{B_0, Q} \le C \langle \sigma \rangle_{B, Q} \theta \Bigl( \frac { \langle \sigma \rangle_{B, Q}} { \langle \sigma \rangle_Q} \Bigr). 
\end{equation*}
\end{priorResults}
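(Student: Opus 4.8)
The plan is to pass both Orlicz averages through the distributional formula of Lemma~\ref{l:B}, reduce the desired inequality to a pointwise-in-$\lambda$ estimate followed by an integration, and then convert that integration into an application of the weak concavity of $t\mapsto t\theta(t)$. Fix the cube $Q$ and abbreviate $D(\lambda):=D_Q(\lambda)$; throughout we may assume $\langle\sigma\rangle_{B,Q}<\infty$, since otherwise there is nothing to prove (and $\langle\sigma\rangle_{B_0,Q}<\infty$ then follows from $B_0\lesssim B\cdot\theta(\beta(1))$ on the relevant range).

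\emph{Step 1: reduction to a distributional integral.} Dividing the hypothesis $B_0(t)\lesssim B(t)\theta(\beta(t))$ by $t$ yields $\beta_0(t)\lesssim\beta(t)\theta(\beta(t))$ for all $t$. Evaluating at $t=D(\lambda)^{-1}$ and multiplying by $D(\lambda)$,
\begin{equation*}
D(\lambda)\,\beta_0\bigl(D(\lambda)^{-1}\bigr)\;\lesssim\;D(\lambda)\,\beta\bigl(D(\lambda)^{-1}\bigr)\,\theta\bigl(\beta(D(\lambda)^{-1})\bigr).
\end{equation*}
Applying Lemma~\ref{l:B} to $B_0=t\beta_0(t)$ (its proof goes through for any increasing, slowly varying profile, which $\beta_0$ is, being dominated by the slowly varying $\beta\cdot\theta(\beta)$) and to $B$ itself, we obtain
\begin{equation*}
\langle\sigma\rangle_{B_0,Q}\;\simeq\;\int_0^\infty D(\lambda)\,\beta_0\bigl(D(\lambda)^{-1}\bigr)\,d\lambda\;\lesssim\;\int_0^\infty D(\lambda)\,\beta\bigl(D(\lambda)^{-1}\bigr)\,\theta\bigl(\beta(D(\lambda)^{-1})\bigr)\,d\lambda.
\end{equation*}

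\emph{Step 2: weak concavity via Jensen.} By the layer-cake identity $\int_0^\infty D(\lambda)\,d\lambda=\langle\sigma\rangle_Q$, so $d\mu:=\langle\sigma\rangle_Q^{-1}D(\lambda)\,d\lambda$ is a probability measure on $(0,\infty)$. Put $u(\lambda):=\beta(D(\lambda)^{-1})>0$ and $f(t):=t\theta(t)$; the last integral in Step~1 equals $\langle\sigma\rangle_Q\int f(u)\,d\mu$, and Lemma~\ref{l:B} gives $\int u\,d\mu=\langle\sigma\rangle_Q^{-1}\int_0^\infty D(\lambda)\beta(D(\lambda)^{-1})\,d\lambda\simeq\langle\sigma\rangle_{B,Q}/\langle\sigma\rangle_Q$, which is $\ge1$ since $\langle\sigma\rangle_Q\le\langle\sigma\rangle_{B,Q}$ by Jensen's inequality for the convex function $B$ (using $B(1)=1$). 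Using the weak concavity of $f=t\theta(t)$ in the integral form $\int f(u)\,d\mu\le C^{-1}f\bigl(\int u\,d\mu\bigr)$, we conclude
\begin{equation*}
\langle\sigma\rangle_{B_0,Q}\;\lesssim\;\langle\sigma\rangle_Q\,f\!\Bigl(\int_0^\infty u\,d\mu\Bigr)\;\lesssim\;\langle\sigma\rangle_{B,Q}\,\theta\!\Bigl(\frac{\langle\sigma\rangle_{B,Q}}{\langle\sigma\rangle_Q}\Bigr),
\end{equation*}
where the $\simeq$-constants coming from Lemma~\ref{l:B} are absorbed using that $\theta$ is decreasing and, for the profiles $\theta$ at hand, satisfies $\theta(cx)\simeq\theta(x)$.

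The step I expect to be the crux is the integral form of weak concavity used in Step~2: Definition~\ref{d:wc} is phrased for finite convex combinations, and one must upgrade it to a Jensen inequality against the probability measure $\mu$ with the same constant. For the explicit log and log-log $\theta$'s this is immediate from continuity; in general one approximates $\mu$ by finitely supported probability measures and passes to the limit, with the monotonicity of $\theta$ controlling the limiting behaviour and all arguments remaining in $(0,\infty)$. Everything else is bookkeeping around Lemma~\ref{l:B} and the layer-cake formula.
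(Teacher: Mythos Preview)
Your proof is correct and follows essentially the same route as the paper: apply Lemma~\ref{l:B} to $B_0$, use the hypothesis to replace $\beta_0$ by $\beta\cdot\theta(\beta)$, then apply the Jensen-type inequality for the weakly concave $\tilde\theta(t)=t\theta(t)$ against the probability measure $D_Q(\lambda)\,d\lambda$ (the paper normalizes $\langle\sigma\rangle_Q=1$ rather than introducing $\mu$, but this is cosmetic), and finish with Lemma~\ref{l:B} applied to $B$. You are in fact more careful than the paper on two points the paper leaves implicit---the passage from finite convex combinations in Definition~\ref{d:wc} to the integral Jensen inequality, and the absorption of the $\simeq$-constants from Lemma~\ref{l:B} into the argument of $\theta$.
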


The proof of the theorem relies upon Lemma~\ref{l:B}, and a Jensen inequality for weakly concave functions. 
(The constant in the Jensen inequality depends upon the  constant in weak convexity.)

\begin{proof}
We can assume that $ \langle \sigma  \rangle_{Q} = 1$, so that $ \int _{0} ^{\infty } D_Q (\lambda )\; d \lambda =1$. 
There holds, by Lemma~\ref{l:B}, 
\begin{align*}
\langle \sigma \rangle_{B_0, Q}  &\simeq  \int _{0} ^{\infty  } D_Q (\lambda ) \beta _0 ( D_Q (\lambda ))\; d \lambda 
\\
& \lesssim  
 \int _{0} ^{\infty  } D_Q (\lambda ) \beta  (D_Q (\lambda )) \theta  ( \beta (D_Q (\lambda )))\; d \lambda 
\\
& = \int _{0} ^{\infty }  D_Q (\lambda )  \tilde \theta (  \beta  (D_Q (\lambda ) ) \; d \lambda   &  (\tilde \theta (t) := t \theta (t))
\\
& \lesssim  \tilde \theta \Bigl(  \int _{0} ^{\infty }  D_Q (\lambda )  \beta ( D_Q (\lambda)  )  \; d \lambda  \Bigr)
\\
& = \int _{0} ^{\infty }  D_Q (\lambda )  \beta ( D_Q (\lambda)  )  \; d \lambda  \times   \theta 
\Bigl( \int _{0} ^{\infty }  D_Q (\lambda )  \beta ( D_Q (\lambda)  )  \; d \lambda   \Bigr) 
\\&\simeq \langle \sigma  \rangle_{B,Q}  \theta ( \langle \sigma  \rangle _{B,Q}) . 
 \end{align*}
The first inequality is just hypothesis, and the second inequality is  the  Jensen inequaltiy which is an immediate corollary to weak concavity. 
Finally, the expression is rewritten.  This proves the Theorem. 
\end{proof}

\begin{proof}[Proof of Proposition~\ref{p:eta}] 
In the first case, we have $  [\sigma , w] _{\overline {L _{p',\eta }} ,p' }< \infty$.  Set 
\begin{align*}
B (u) := u (\operatorname{Log} u) ^{p'-1+ \eta }\,, \qquad  \theta (u) := (1+u) ^{- \eta /2}. 
\end{align*}
Then, set $ B_0 (u) := B (u) \theta ( \operatorname {Log} u)$, and $ \overline  {A_0} := B_0 (t ^{p})$.  From the definitions, 
$ A_0 (t) = L _{p',\eta /2}$, and 
\begin{align} \label{e:theta}
\begin{split}
\langle \sigma ^{1/p'}  \rangle_{\overline {A_0}, Q} ^{p'} &:= 
\inf \{ \lambda ^{p'} >0 \::\: \frac 1 {\lvert  Q\rvert } \int _{Q}  B_0 (\sigma / \lambda ^{p'}) \le 1\} 
= \langle \sigma  \rangle _{B_0, Q} 
\\
& \lesssim \langle \sigma \rangle_{B, Q}  \theta \Bigl( \frac { \langle \sigma \rangle_{B, Q}} {\sigma (Q)} \Bigr).
\end{split}
\end{align}
The last line follows from the Theorem~\ref{t:wc}, and the observation that $ t \to t \theta (t)$ is 
weakly concave.  

With $ \varepsilon _p (t) ^{p'} := \theta (t)  ^{-1}$, it follows from our assumptions 
that $ A_0 \in B_ p$, and moreover that 
\begin{equation}\label{e:ep}
\begin{split}
\varepsilon _p (t) ^{p'} 
\langle \sigma ^{1/p'} \rangle _{\overline  {A_0}, Q} ^{p'} \langle  w \rangle ^{p'/p} 
& \lesssim  \langle \sigma \rangle_{B, Q}  \langle  w \rangle ^{p'/p}  
\\
& = \langle \sigma ^{1/p'} \rangle _{\overline A, Q} ^{p'}  \langle  w \rangle ^{p'/p} \le  [\sigma , w] _{\overline {L _{p',\eta }} ,p' } ^{p'}.  
\end{split}
\end{equation}
And this completes this case, since the integral condition \eqref{e:finite} becomes, after a change of variables $ e ^{u} = t$, 
\begin{equation*}
\int ^{\infty } \varepsilon_p (t) ^{-p'} \frac {dt} t 
= \int ^{\infty }  \operatorname e ^{ - \eta  u /2} \; d u < \infty . 
\end{equation*}

\medskip 
In this case,  the assumption is  $  [\sigma , w] _{\overline {\Lambda  _{p', 1+\eta }} ,p' }< \infty$.  Recall that in this case we have 
\begin{equation*}
\overline  {\Lambda _{p', 1+ \eta }} (t) 
\simeq t ^{p'} (\operatorname {Log}t ^{p'} ) ^{\frac 1 {p-1}} (\operatorname {Log}\operatorname {Log} t ^{p'}) ^{ \frac 1 {p-1} + 1+ \eta }.  
\end{equation*}
Set 
\begin{align*}
B (u) := u (\operatorname{Log} u)^{\frac 1 {p-1}}  (\operatorname{Log}\operatorname{Log} u) ^{ {\frac 1 {p-1}} + \eta /2 }\,, \qquad  \theta (u) := (\operatorname{Log}  u) ^{-1- \eta' /2}. 
\end{align*}
Then, $ B_0 (u) := B (u) \theta (u)$, and $ \overline  {A_0} (t) := B (t ^{p})$.  Then, $ A_0 \in B_p$. 
The function $ t \mapsto t \theta (t)$ is 
weakly concave. (One can consult \cite{13062653}*{\S5.2.2}, or alternatively, check that  
  $ \theta $ is concave on $ [C_0, \infty )$, for sufficiently large $ C_0$.)   Thus, \eqref{e:theta} continues to hold in this case.  

With $ \varepsilon _p (t) ^{p'} := \theta (t)  ^{-1}$, the inequality \eqref{e:ep} will continue to hold, with obvious changes.  
And the integral condition \eqref{e:finite} holds, since with the same exponential change of variables, 
\begin{equation*}
\int ^{\infty } \varepsilon_p (t) ^{-p'} \frac {dt} t 
= \int ^{\infty }  \; \frac{ d u} {u ^{1+ \eta /2}} < \infty . 
\end{equation*}

\end{proof}

\begin{bibsection}
\begin{biblist}
\bib{13082026}{article}{
  author={{Anderson}, T.~C.},
  author={Cruz-Uribe, D.},
  author={{Moen}, K.},
  title={Logarithmic bump conditions for Calder$\backslash $'on-Zygmund Operators on spaces of homogeneous type},
  eprint={http://arxiv.org/abs/1308.2026},
}

\bib{MR800004}{article}{
  author={Chang, S.-Y. A.},
  author={Wilson, J. M.},
  author={Wolff, T. H.},
  title={Some weighted norm inequalities concerning the Schr\"odinger operators},
  journal={Comment. Math. Helv.},
  volume={60},
  date={1985},
  number={2},
  pages={217--246},
}

\bib{MR2351373}{article}{
  author={Cruz-Uribe, David V.},
  author={Martell, Jos{\'e} Maria},
  author={P{\'e}rez, Carlos},  title={Sharp two-weight inequalities for singular integrals, with applications to the Hilbert transform and the Sarason conjecture},
  journal={Adv. Math.},
  volume={216},
  date={2007},
  number={2},
  pages={647--676},
}

\bib{MR2797562}{book}{
  author={Cruz-Uribe, David V.},
  author={Martell, Jos{\'e} Maria},
  author={P{\'e}rez, Carlos},
  title={Weights, extrapolation and the theory of Rubio de Francia},
  series={Operator Theory: Advances and Applications},
  volume={215},
  publisher={Birkh\"auser/Springer Basel AG, Basel},
  date={2011},
  pages={xiv+280},
  isbn={978-3-0348-0071-6},
}

\bib{MR2854179}{article}{
  author={Cruz-Uribe, David},
  author={Martell, Jos{\'e} Mar{\'{\i }}a},
  author={P{\'e}rez, Carlos},
  title={Sharp weighted estimates for classical operators},
  journal={Adv. Math.},
  volume={229},
  date={2012},
  number={1},
  pages={408--441},
}

\bib{MR1793688}{article}{
  author={Cruz-Uribe, D.},
  author={P{\'e}rez, C.},
  title={Two-weight, weak-type norm inequalities for fractional integrals, Calder\'on-Zygmund operators and commutators},
  journal={Indiana Univ. Math. J.},
  volume={49},
  date={2000},
  number={2},
  pages={697--721},
}

\bib{MR1713140}{article}{
  author={Cruz-Uribe, D.},
  author={P{\'e}rez, C.},
  title={Sharp two-weight, weak-type norm inequalities for singular integral operators},
  journal={Math. Res. Lett.},
  volume={6},
  date={1999},
  number={3-4},
  pages={417--427},
}

\bib{11120676}{article}{
   author={Cruz-Uribe, David},
   author={Reznikov, Alexander},
   author={Volberg, Alexander},
   title={Logarithmic bump conditions and the two-weight boundedness of
   Calder\'on--Zygmund operators},
   journal={Adv. Math.},
   volume={255},
   date={2014},
   pages={706--729},
   issn={0001-8708},
   review={\MR{3167497}},
   doi={10.1016/j.aim.2014.01.016},
}

\bib{MR0312139}{article}{
  author={Hunt, Richard},
  author={Muckenhoupt, Benjamin},
  author={Wheeden, Richard},
  title={Weighted norm inequalities for the conjugate function and Hilbert transform},
  journal={Trans. Amer. Math. Soc.},
  volume={176},
  date={1973},
  pages={227--251},
}

\bib{2912709}{article}{
  author={Hyt{\"o}nen, Tuomas P.},
  title={The sharp weighted bound for general Calder\'on-Zygmund operators},
  journal={Ann. of Math. (2)},
  volume={175},
  date={2012},
  number={3},
  pages={1473--1506},
}

\bib{12123840}{article}{
  author={Hyt{\"o}nen, Tuomas P.},
  title={The $A_2$ theorem: Remarks and complements},
  date={2012},
  eprint={http://arxiv.org/abs/http://www.arxiv.org/abs/1212.3840},
}

\bib{09113437}{article}{
  author={Lacey, Michael T.},
  author={Sawyer, Eric T.},
  author={Uriarte-Tuero, Ignacio},
  title={Two Weight Inequalities for Discrete Positive Operators},
  date={2009},
  journal={Submitted},
  eprint={http://arxiv.org/abs/http://www.arxiv.org/abs/0911.3437},
}

\bib{12014319}{article}{
  author={Lacey, Michael T.},
  author={Sawyer, Eric T.},
  author={Shen, Chun-Yun},
  author={Uriarte-Tuero, Ignacio},
  title={Two Weight Inequality for the Hilbert Transform: A Real Variable Characterization, Part I},
  eprint={http://www.arxiv.org/abs/1201.4319v9},
  journal={Duke MJ, to appear},
}

\bib{MR3127380}{article}{
   author={Lerner, Andrei K.},
   title={On an estimate of Calder\'on-Zygmund operators by dyadic positive
   operators},
   journal={J. Anal. Math.},
   volume={121},
   date={2013},
   pages={141--161},
   issn={0021-7670},
   review={\MR{3127380}},
   doi={10.1007/s11854-013-0030-1},
}

\bib{MR3085756}{article}{
   author={Lerner, Andrei K.},
   title={A simple proof of the $A_2$ conjecture},
   journal={Int. Math. Res. Not. IMRN},
   date={2013},
   number={14},
   pages={3159--3170},
   issn={1073-7928},
   review={\MR{3085756}},
}

\bib{MR3127385}{article}{
   author={Nazarov, Fedor},
   author={Reznikov, Alexander},
   author={Treil, Sergei},
   author={Volberg, ALexander},
   title={A Bellman function proof of the $L^2$ bump conjecture},
   journal={J. Anal. Math.},
   volume={121},
   date={2013},
   pages={255--277},
   issn={0021-7670},
   review={\MR{3127385}},
   doi={10.1007/s11854-013-0035-9},
}

\bib{13062653}{article}{
  author={Nazarov, Fedor},
  author={Reznikov, Alexander},
  author={Volberg, Alexander},
  title={Bellman approach to the one-sided bumping for weighted estimates of Calder\'on--Zygmund operators},
  eprint={http://arxiv.org/abs/1306.2653},
  date={2011},
}

\bib{MR687633}{article}{
  author={Neugebauer, C. J.},
  title={Inserting $A_{p}$-weights},
  journal={Proc. Amer. Math. Soc.},
  volume={87},
  date={1983},
  number={4},
  pages={644--648},
}

\bib{MR1260114}{article}{
  author={P{\'e}rez, C.},
  title={Weighted norm inequalities for singular integral operators},
  journal={J. London Math. Soc. (2)},
  volume={49},
  date={1994},
  number={2},
  pages={296--308},
}

\bib{MR1291534}{article}{
  author={P{\'e}rez, Carlos},
  title={Two weighted inequalities for potential and fractional type maximal operators},
  journal={Indiana Univ. Math. J.},
  volume={43},
  date={1994},
  number={2},
  pages={663--683},
}

\bib{sarasonConj}{article}{
  author={Sarason, Donald},
  title={Products of Toeplitz operators},
  book={title={Linear and complex analysis. Problem book 3. Part I}, series={Lecture Notes in Mathematics}, volume={1573}, editor={Havin, V. P.}, editor={Nikolski, N. K.}, publisher={Springer-Verlag}, place={Berlin}, date={1994}, },
  pages={318-319},
}

\bib{MR676801}{article}{
  author={Sawyer, Eric T.},
  title={A characterization of a two-weight norm inequality for maximal operators},
  journal={Studia Math.},
  volume={75},
  date={1982},
  number={1},
  pages={1--11},
}

\bib{MR930072}{article}{
  author={Sawyer, Eric T.},
  title={A characterization of two weight norm inequalities for fractional and Poisson integrals},
  journal={Trans. Amer. Math. Soc.},
  volume={308},
  date={1988},
  number={2},
  pages={533--545},
}

\bib{MR1617653}{article}{
  author={Treil, Sergei},
  author={Volberg, Alexander},
  author={Zheng, Dechao},
  title={Hilbert transform, Toeplitz operators and Hankel operators, and invariant $A_\infty $ weights},
  journal={Rev. Mat. Iberoamericana},
  volume={13},
  date={1997},
  number={2},
  pages={319--360},
}

\bib{MR1395967}{article}{
  author={Zheng, Dechao},
  title={The distribution function inequality and products of Toeplitz operators and Hankel operators},
  journal={J. Funct. Anal.},
  volume={138},
  date={1996},
  number={2},
  pages={477--501},
}

\end{biblist}
\end{bibsection}

\end{document}